\newtheorem{thm}{Theorem}[section]
\newtheorem{prop}[thm]{Proposition}
\newtheorem{lem}[thm]{Lemma}
\newtheorem{cor}[thm]{Corollary}
\newtheorem{ex}[thm]{Example}
\newtheorem{defn}[thm]{Definition}
\newtheorem{qn}[thm]{Question}
\theoremstyle{remark}
\newtheorem{rem}[thm]{Remark}
\DeclarePairedDelimiter\bra{\langle}{\rvert}
\DeclarePairedDelimiter\ket{\lvert}{\rangle}
\DeclarePairedDelimiterX\braket[2]{\langle}{\rangle}{#1 \delimsize\vert #2}
\begin{document}
\title{Morita equivalence of two $\ell^p$ Roe-type algebras}
\author{Yeong Chyuan Chung}
\date{\today}
\subjclass[2020]{Primary: 47L10; Secondary: 46L80, 51F30}
\keywords{Morita equivalence, $L^p$ operator algebras, coarse geometry, $K$-theory}
\maketitle

\begin{abstract}
Given a metric space with bounded geometry, one may associate with it the $\ell^p$ uniform Roe algebra and the $\ell^p$ uniform algebra, both containing information about the large scale geometry of the metric space.
We show that these two Banach algebras are Morita equivalent in the sense of Lafforgue for $1\leq p<\infty$.
As a consequence, these two Banach algebras have the same $K$-theory.
We then define an $\ell^p$ uniform coarse assembly map taking values in the $K$-theory of the $\ell^p$ uniform Roe algebra and show that it is not always surjective.
\end{abstract}


\section{Introduction}

The study of Morita equivalence began with the investigation of rings, initiated by Morita in his seminal work \cite{Mor58} in 1958. Two rings are said to be Morita equivalent if there exists a bimodule that yields an equivalence of the corresponding module categories. Morita equivalence provides a way to translate algebraic properties from one ring to another, facilitating the understanding of the structural similarities and differences between rings.

In the context of Banach algebras, Morita equivalence provides a means of comparing the algebraic and topological properties of Banach algebras via equivalence of their respective categories of modules. 
The theory of $C^*$-algebras, a special class of Banach algebras with a rich interplay between functional analysis and topology, has benefited immensely from Rieffel's theory of Morita equivalence introduced in the early 1970s \cite{Rie74,Rie74a,Rie82}. In particular, (strong) Morita equivalence provides a powerful framework for studying crossed product $C^*$-algebras arising from actions of locally compact groups on $C^*$-algebras \cite{Rie76,Rie82a}.

The classical Mackey's imprimitivity theorem \cite[Theorem 2]{Mac49} answered the question about which representations of a locally compact group are induced from representations of a given closed subgroup. Rieffel's first construction of a Morita equivalence in \cite{Rie74} recast Mackey's theorem as a Morita equivalence between a certain crossed product $C^*$-algebra and the group $C^*$-algebra of the subgroup.
Since then, Rieffel's approach to Morita equivalence has led to other imprimitivity theorems involving crossed product $C^*$-algebras, providing deep insight into the structure of these $C^*$-algebras and their induced representations.
We refer the reader to the monograph \cite{Wil} for an introduction to crossed product $C^*$-algebras and imprimitivity theorems.

Going beyond $C^*$-algebras, Lafforgue introduced a notion of Morita equivalence for nondegenerate Banach algebras in an unpublished note \cite{Laf04} using Banach pairs, which are pairs of Banach modules that are dual to each other in a certain sense. He also showed that Morita equivalences between Banach algebras induce isomorphisms on $K$-theory.
This notion of Morita equivalence generalizes an earlier notion of Morita equivalence for Banach algebras introduced by Gr\o{}nb\ae k \cite{Gro95,Gro96} as well as Rieffel's (strong) Morita equivalence for $C^*$-algebras.
Paravicini gave a systematic treatment and extension of Lafforgue's ideas in \cite{Par09}, and further extended the notion of Morita equivalence to cover possibly degenerate Banach algebras in \cite[Section 1.2]{Par15}.
In Section \ref{sect:Mor}, we will recall definitions that we need from \cite{Laf02} and \cite{Par09}.

Roe $C^*$-algebras, introduced in the late 1980s, capture coarse geometric properties of (discrete) metric spaces.
Besides the original Roe algebra, there are variants that are sometimes referred to as Roe-type algebras \cite[Definition 2.1]{SW13a}.
These $C^*$-algebras offer a bridge between geometric and algebraic concepts, making them objects of interest in various areas of mathematics, including geometric group theory, index theory, and noncommutative geometry (e.g. \cite{Chung20,LL,LW,SW13,SW13a,Wei,Yu00}). 

In recent years, $\ell^p$ Roe-type algebras have been studied \cite{Chung18,Chung21a,Chung23,LWZ,SW,ZZ} amidst more general interest in $L^p$ operator algebras (e.g. \cite{G21,GL,GT16,GT19,GT22,HP,Phil12,Phil13,WW,WZ}). 
In the current paper, we will be interested in two of these, namely the $\ell^p$ uniform Roe algebra and the $\ell^p$ uniform algebra.
We recall their definitions from \cite{Chung18}.

\begin{defn}
Let $X$ be a metric space. Then $X$ is said to have \emph{bounded geometry} if for all $R\geq 0$ there exists $N_R\in\mathbb{N}$ such that for all $x\in X$, the ball of radius $R$ about $x$ has at most $N_R$ elements.
\end{defn}

Note that every metric space with bounded geometry is necessarily countable and discrete.
The two $\ell^p$ Roe-type algebras are associated with $X$, and are generated by certain bounded operators with finite propagation.
In the following definition, we shall denote by $e_y$ the standard basis vector in $\ell^p(X)$ corresponding to $y\in X$.

\begin{defn} 
For an operator $T=(T_{xy})_{x,y\in X}\in B(\ell^p(X))$, where $T_{xy}=(Te_y)(x)$, we define the propagation of $T$ to be
\[ \mathop{\rm prop}(T)=\sup\{ d(x,y):x,y\in X,T_{xy}\neq 0 \}\in[0,\infty]. \]
We denote by $\mathbb{C}_u^p[X]$ the unital algebra of all bounded operators on $\ell^p(X)$ with finite propagation. The $\ell^p$ uniform Roe algebra, denoted by $B^p_u(X)$, is defined to be the operator norm closure of $\mathbb{C}_u^p[X]$ in $B(\ell^p(X))$.
\end{defn}

\begin{defn}
Let $U\mathbb{C}^p[X]$ be the algebra of all finite propagation bounded operators $T$ on $\ell^p(X,\ell^p)$ for which there exists $N\in\mathbb{N}$ such that $T_{xy}$ is an operator on $\ell^p$ of rank at most $N$ for all $x,y\in X$.
The $\ell^p$ uniform algebra of $X$, denoted by $UB^p(X)$, is the operator norm closure of $U\mathbb{C}^p[X]$ in $B(\ell^p(X,\ell^p))$.
\end{defn}

In the $p=2$ case, \v{S}pakula \cite{Spa09} defined uniform $K$-homology groups and a uniform coarse assembly map taking values in the $K$-theory of the uniform algebra $UB^2(X)$. 
By \cite[Proposition 4.7]{SW13}, $UB^2(X)$ is (strongly) Morita equivalent to the uniform Roe algebra $B^2_u(X)$ in the sense of Rieffel. As Morita equivalent $C^*$-algebras have the same $K$-theory, the uniform coarse assembly map may be regarded as taking values in the $K$-theory of $B^2_u(X)$.
Willett and Yu \cite[Theorem A.3]{WY12} established non-surjectivity of this uniform coarse assembly map when $X$ is an expander.

One should note that if $A$ and $B$ are $C^*$-algebras with countable approximate identities (also known as $\sigma$-unital $C^*$-algebras), then $A$ and $B$ are strongly Morita equivalent if and only if they are stably isomorphic, i.e., $A\otimes K\cong B\otimes K$, where $K$ is the algebra of compact operators on a separable Hilbert space \cite{BGR}.
However, when $X$ is infinite, this fact cannot be applied to $UB^2(X)$ as it is not $\sigma$-unital. 

In Section \ref{sect:main}, we generalize \cite[Proposition 4.7]{SW13} from the $p=2$ case to $1\leq p<\infty$.
\begin{thm} (Theorem \ref{thm:equivalence})
Let $X$ be a metric space with bounded geometry.
The Banach algebras $B^p_u(X)$ and $UB^p(X)$ are Morita equivalent in the sense of Lafforgue for $1\leq p<\infty$.
\end{thm}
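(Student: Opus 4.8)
The plan is to follow the classical route to Morita equivalences, adapted to the Banach setting as in \cite{Laf04,Par09}: realize $B^p_u(X)$ as a \emph{full idempotent corner} of $UB^p(X)$ and form the associated Banach pair. Fix a norm-one rank-one idempotent $q$ on the fibre $\ell^p$ --- the projection onto the first standard basis vector --- and put $e:=1_{\ell^p(X)}\otimes q$, where we identify $\ell^p(X,\ell^p)$ with $\ell^p(X)\otimes_p\ell^p$. Then $e$ has propagation $0$ and rank-one entries, so $e\in U\mathbb{C}^p[X]\subseteq UB^p(X)$. If $v_{ij}$ denote the matrix units on $\ell^p$, then each $1_{\ell^p(X)}\otimes v_{ij}$ also lies in $U\mathbb{C}^p[X]$, and $1\otimes v_{ij}=(1\otimes v_{i0})\,e\,(1\otimes v_{0j})$; hence every $1\otimes v_{ij}$, and therefore every $1\otimes\pi_d$ (with $\pi_d$ the projection onto the first $d$ coordinates of $\ell^p$), lies in the closed two-sided ideal $I:=\overline{UB^p(X)\,e\,UB^p(X)}$, so that $(1\otimes\pi_d)\,UB^p(X)\,(1\otimes\pi_d)\subseteq I$ for every $d$.

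Two facts then need to be checked, after which everything is formal. First, $eUB^p(X)e$ is isometrically isomorphic to $B^p_u(X)$: the map $T\mapsto T\otimes q$ is an algebra isomorphism $\mathbb{C}_u^p[X]\to eU\mathbb{C}^p[X]e$, it is isometric because $\ell^p(X,\ell^p)=\ell^p(X)\otimes_p\ell^p$ and $q$ is a rank-one projection --- a one-line computation gives $\|T\otimes q\|=\|T\|$ --- and since compression by the bounded idempotent $e$ commutes with taking operator-norm closures one obtains $eUB^p(X)e=\overline{eU\mathbb{C}^p[X]e}\cong\overline{\mathbb{C}_u^p[X]}=B^p_u(X)$. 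Granting this, I would form the Banach pair $\bigl(UB^p(X)\,e,\ eUB^p(X)\bigr)$ with the $UB^p(X)$-valued bracket $(ae,eb)\mapsto aeb$ and the $B^p_u(X)$-valued bracket $(eb,ae)\mapsto ebae\in eUB^p(X)e$, and verify that this is a Morita equivalence in the sense recalled in Section~\ref{sect:Mor}. The brackets are evidently contractive, associative and balanced; nondegeneracy is automatic because $B^p_u(X)$ is unital (its unit $1_{\ell^p(X)}$ corresponds to $e$) and $\overline{UB^p(X)^2}=UB^p(X)$; and fullness of the brackets reduces to the two identities $\overline{eUB^p(X)e}=B^p_u(X)$ (just noted) and --- the one substantive point --- $I=UB^p(X)$.

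The main obstacle is precisely this last identity, i.e. that \emph{$e$ is full in $UB^p(X)$}. By density it suffices to show that an arbitrary $T\in U\mathbb{C}^p[X]$ with propagation $\le R$ and all entries $T_{xy}$ of rank $\le N$ lies in $I$. Here bounded geometry is essential: each row and column of $T$ has at most $N_R$ nonzero entries, so the ``range space'' $V_x:=\sum_{d(x,y)\le R}\operatorname{ran}(T_{xy})$ and a ``source space'' $W_x$ (a fixed complement of $\bigcap_{d(x,y)\le R}\ker(T_{yx})$) each have dimension at most $d:=N\,N_R$, \emph{uniformly in} $x$. By the Kadec--Snobar inequality there are projections onto $V_x$ and onto $W_x$ of norm at most $\sqrt d$, and by John's theorem each of $V_x$, $W_x$ is $C(d)$-isomorphic to $\ell^p_d$ with a constant $C(d)$ depending only on $d$; assembling such isomorphisms and projections block-diagonally over $X$ yields operators $\alpha,\beta\in U\mathbb{C}^p[X]$ of propagation $0$ and an operator $C\in(1\otimes\pi_d)\,U\mathbb{C}^p[X]\,(1\otimes\pi_d)$ of propagation $\le R$ with $T=\alpha\,C\,\beta$, all of operator norm controlled by $C(d)$ and $\|T\|$ alone --- so the construction really is uniform in $x$ and $\alpha,\beta,C$ are bounded. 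Since $C\in I$ and $I$ is an ideal, $T=\alpha C\beta\in I$, as required; a similar (and simpler) compression, replacing $T$ by $T\beta'$ with $\beta'$ the block-diagonal projection onto the $W_x$'s, gives $\overline{UB^p(X)^2}=UB^p(X)$.

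Finally, this is where the argument genuinely goes beyond the $C^*$-algebraic case $p=2$ of \cite[Proposition~4.7]{SW13}: there the block-diagonal isomorphisms $\alpha_x,\beta_x$ can be taken isometric, since all $d$-dimensional subspaces of a Hilbert space are isometric to $\ell^2_d$, and no quantitative finite-dimensional geometry is needed; for $p\ne 2$ one must control the Banach--Mazur distortion of $d$-dimensional subspaces of $\ell^p$ uniformly, which is the new ingredient.
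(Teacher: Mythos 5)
Your argument is correct, and it reaches the theorem by a genuinely different route from the paper. The paper writes down the equivalence bimodule explicitly: $E^>$ is the completion of the finite-propagation $X\times X$ matrices with uniformly bounded entries in $\ell^p$, viewed as operators $\ell^p(X)\to\ell^p(X,\ell^p)$, $E^<$ is the analogous space with entries in $\ell^q$, and all pairings are operator composition; fullness over $B^p_u(X)$ is checked by decomposing elements of $\mathbb{C}^p_u[X]$ into partial translations, and fullness over $UB^p(X)$ by decomposing elements of $U\mathbb{C}^p[X]$ into finite sums of operators with rank-one entries. You instead exhibit $B^p_u(X)$ as the corner $eUB^p(X)e$ cut out by the propagation-zero idempotent $e=1\otimes q$ and invoke the standard corner equivalence $(UB^p(X)e,\,eUB^p(X))$, reducing the whole theorem to the single nontrivial assertion that $e$ is full, which you prove by a uniform finite-dimensional factorization $T=\alpha C\beta$ via Kadec--Snobar and John. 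The two constructions in fact produce essentially the same bimodule in disguise: $be\mapsto(b_{xy}v)_{x,y}$, with $v$ the unit vector spanning the range of $q$, identifies $UB^p(X)e$ with the paper's $E^>$ and, dually, $eUB^p(X)$ with $E^<$. What your packaging buys is a cleaner logical reduction and an honest treatment of the quantitative Banach-space geometry that the paper's argument also needs but elides (the uniform boundedness of the left identity $P=\bigoplus_x P_x$, and the claim that an operator with rank-$\le N$ entries is a finite sum of operators with rank-one entries, both require uniform-in-$x$ bounds of exactly this Kadec--Snobar/John type); it also makes the inverse map on $K$-theory from Section \ref{sect:inv} nearly tautological, since a full-corner inclusion induces an isomorphism on $K$-theory. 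What the paper's packaging buys is the explicit module description it uses later. Two small points to tidy in your write-up: John's theorem bounds the distance of a $d$-dimensional space to $\ell^2_d$, so you must compose with the estimate that the Banach--Mazur distance from $\ell^2_d$ to $\ell^p_d$ is at most $d^{|1/2-1/p|}$ to obtain your $C(d)$; and the right-hand compression onto the spaces $W_x$ requires a uniformly bounded projection with \emph{prescribed kernel} $\bigcap_y\ker T_{yx}$, which takes a short duality argument (apply Kadec--Snobar in $\ell^q$ to the annihilator and dualize) --- although you can dispense with $\beta$ altogether, since $T=\alpha C$ with $C\in(1\otimes\pi_d)\,U\mathbb{C}^p[X]$ already lies in the two-sided ideal $I$.
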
 

Morita equivalence in the sense of Lafforgue preserves $K$-theory.
Also, for any countable discrete group $\Gamma$ equipped with a proper left-invariant metric, the uniform Roe algebra $B^p_u(\Gamma)$ is isometrically isomorphic to the reduced crossed product $\ell^\infty(\Gamma)\rtimes_{\lambda,p}\Gamma$.
Hence our main theorem has the following consequences.

\begin{cor} (Corollaries \ref{cor:K} and \ref{cor:group})
\begin{enumerate}
\item The Banach algebras $B^p_u(X)$ and $UB^p(X)$ have the same $K$-theory for $1\leq p<\infty$.
\item For any countable discrete group $\Gamma$, and $1\leq p<\infty$, the Banach algebras $\ell^\infty(\Gamma)\rtimes_{\lambda,p}\Gamma$ and $UB^p(\Gamma)$ are Morita equivalent, and thus have the same $K$-theory.
\end{enumerate}
\end{cor}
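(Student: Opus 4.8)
The plan is to realize $B^p_u(X)$ as a \emph{full corner} of $UB^p(X)$ cut down by an idempotent multiplier, and then to invoke the general principle — obtainable directly from the definitions to be recalled in Section \ref{sect:Mor}, and present already in \cite{Par09} — that a full idempotent gives a Morita equivalence in the sense of Lafforgue. Concretely, fix the rank-one idempotent $e=e_{11}\in B(\ell^p)$ onto one standard basis vector of the fibre $\ell^p$, together with the matrix units $e_{ij}\in B(\ell^p)$, and set $\mathfrak e=\mathrm{id}_{\ell^p(X)}\otimes e\in B(\ell^p(X,\ell^p))$. Then $\mathfrak e$ is a norm-one idempotent, and since $\mathfrak e T$ and $T\mathfrak e$ have propagation at most that of $T$ and entrywise rank at most that of $T$, left and right multiplication by $\mathfrak e$ preserve $U\mathbb C^p[X]$ and hence $UB^p(X)$; thus $\mathfrak e$ is an idempotent multiplier of $UB^p(X)$, and it is the identity of the subalgebra $\mathfrak e\,UB^p(X)\,\mathfrak e$.

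The first step is the isometric identification $\mathfrak e\,UB^p(X)\,\mathfrak e\cong B^p_u(X)$. The key point, and the only place the passage from $p=2$ to general $p$ requires care, is that $e$ is a \emph{coordinate} projection, so $\ell^p(X,\ell^p)=\mathfrak e\,\ell^p(X,\ell^p)\oplus(1-\mathfrak e)\,\ell^p(X,\ell^p)$ is an isometric $\ell^p$-direct-sum decomposition with $\mathfrak e\,\ell^p(X,\ell^p)\cong\ell^p(X)$ isometrically; consequently an operator annihilated on both sides by $1-\mathfrak e$ has the same norm whether viewed on $\ell^p(X,\ell^p)$ or on $\mathfrak e\,\ell^p(X,\ell^p)$. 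For $T\in U\mathbb C^p[X]$ the block $e_{11}T_{xy}e_{11}$ is a scalar multiple $t_{xy}e_{11}$, and $T\mapsto(t_{xy})$ is an isometric algebra homomorphism from $\mathfrak e\,U\mathbb C^p[X]\,\mathfrak e$ onto $\mathbb C^p_u[X]$ (surjectivity via $(t_{xy})\mapsto(t_{xy})\otimes e_{11}$); passing to closures — and noting $\mathfrak e\,UB^p(X)\,\mathfrak e=\overline{\mathfrak e\,U\mathbb C^p[X]\,\mathfrak e}$ — yields $\mathfrak e\,UB^p(X)\,\mathfrak e\cong B^p_u(X)$.

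The second step, and the one I expect to involve the most bookkeeping, is fullness: $\overline{UB^p(X)\,\mathfrak e\,UB^p(X)}=UB^p(X)$. Since $1\otimes e_{ij}\in U\mathbb C^p[X]$ and $1\otimes e_{ij}=(1\otimes e_{i1})\,\mathfrak e\,(1\otimes e_{1j})$, the closed ideal generated by $\mathfrak e$ contains $1\otimes q_N$ for every finite-rank coordinate projection $q_N$ on $\ell^p$. Given $T\in U\mathbb C^p[X]$ with propagation at most $S$ and entrywise rank at most $N$, use bounded geometry to split the relation $\{(x,y):d(x,y)\le S\}$ into finitely many graphs of partial bijections (a K\"onig-type edge-colouring of the associated bounded-degree bipartite graph), so that $T=\sum_i T^{(i)}$ with each $T^{(i)}$ a "generalized partial translation" supported on a partial bijection and carrying rank-$\le N$ coefficients $B^{(i)}_x$ of norm at most $\|T\|$. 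Factoring each $B^{(i)}_x=C^{(i)}_x\,q_N\,R^{(i)}_x$ with $\sup_x(\|C^{(i)}_x\|+\|R^{(i)}_x\|)<\infty$ — which needs a uniform rank-$N$ factorization lemma on $\ell^p$ — exhibits $T^{(i)}=\mathcal C^{(i)}(1\otimes q_N)\mathcal R^{(i)}$ with $\mathcal C^{(i)},\mathcal R^{(i)}\in UB^p(X)$, whence $T^{(i)}$, and so $T$, lies in $UB^p(X)\,\mathfrak e\,UB^p(X)$; closing up gives the claim. A minor variant of the same argument gives $\overline{UB^p(X)^2}=UB^p(X)$, which together with $\mathfrak e$ being a unit for $\mathfrak e\,UB^p(X)\,\mathfrak e$ supplies the nondegeneracy of all the modules below.

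Finally one assembles the Banach pair: $E^{>}:=\overline{UB^p(X)\,\mathfrak e}$ as a $UB^p(X)$--$B^p_u(X)$-bimodule and $E^{<}:=\overline{\mathfrak e\,UB^p(X)}$ as a $B^p_u(X)$--$UB^p(X)$-bimodule, with the two contractive pairings $E^{>}\times E^{<}\to UB^p(X)$ and $E^{<}\times E^{>}\to\mathfrak e\,UB^p(X)\,\mathfrak e\cong B^p_u(X)$ both given by multiplication in $UB^p(X)$; the required balancing and associativity relations are immediate from associativity, contractivity from $\|\mathfrak e\|=1$, density of the ranges from the preceding step, and nondegeneracy of the modules as noted. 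This is exactly the data of a Morita equivalence in Lafforgue's sense, which proves Theorem \ref{thm:equivalence}; the corollary then follows at once from Lafforgue's theorem that such equivalences preserve $K$-theory together with the isometric isomorphism $B^p_u(\Gamma)\cong\ell^\infty(\Gamma)\rtimes_{\lambda,p}\Gamma$. The main obstacle is not a single computation but verifying that this concrete data meets every clause of the Lafforgue/Paravicini definition even though $UB^p(X)$ is non-unital and, for infinite $X$, not $\sigma$-unital — i.e.\ checking that no such hypothesis is needed and that the possibly-degenerate framework of \cite{Par15} applies verbatim.
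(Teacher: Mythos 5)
Your proposal is correct, and the equivalence data it produces is canonically isomorphic to the paper's, but the packaging and one key lemma are genuinely different, so a comparison is worthwhile. The paper does not phrase the result as a corner theorem: it builds the pair $E=(E^<,E^>)$ by hand, taking $E^>$ to be the completion of the finite-propagation $X\times X$ matrices with uniformly bounded entries in $\ell^p$ (viewed in $B(\ell^p(X),\ell^p(X,\ell^p))$) and $E^<$ the analogous space with entries in $\ell^q$, with all module actions and pairings given by matrix multiplication, and then checks each clause of Lafforgue's definition directly. Under the isometric identification $\mathfrak e\,\ell^p(X,\ell^p)\cong\ell^p(X)$ your $\overline{UB^p(X)\mathfrak e}$ and $\overline{\mathfrak e\,UB^p(X)}$ are exactly these matrix modules (entries $T_{xy}e_{11}$ correspond to the vectors $T_{xy}e_1\in\ell^p$, resp.\ the functionals $e_1^*\circ T_{xy}\in\ell^q$), and the paper's nondegeneracy argument secretly contains your idempotent: its diagonal elements $f^>,f^<$ satisfy $f^<f^>=I$ and $f^>f^<=\mathfrak e$. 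What your route buys is conceptual economy — the bimodule axioms, balancing, and contractivity all collapse to associativity in $UB^p(X)$ — and it makes Section \ref{sect:inv} nearly automatic, since the inverse map on $K$-theory is then visibly induced by a corner embedding (the paper's $i_\mathcal{P}$ is a conjugate of $T\mapsto T\otimes e_{11}$). What the paper's route buys is a fullness proof that avoids your uniform rank-$N$ factorization lemma: instead of factoring rank-$\le N$ blocks through a coordinate projection $q_N$ (which does work, via Kadec--Snobar/John-type complementation with constants depending only on $N$), the paper first splits $T$ into $N$ summands with rank-one entries and then into operators $\zeta\cdot t$ with $\zeta$ diagonal and $t$ a partial translation, each of which is visibly a product $e^>e^<$; note, though, that the uniform rank-one splitting it uses rests on the same finite-dimensional geometry, so neither argument is strictly more elementary. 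Finally, your closing worry is unnecessary: the degenerate framework of \cite{Par15} is not needed, since $U\mathbb{C}^p[X]$ admits a left unit for each of its elements (the block-diagonal projection $P=\bigoplus_x P_x$ onto $\sum_y\mathrm{im}\,T_{xy}$), so $UB^p(X)$ is nondegenerate and \cite{Par09} applies as is — indeed your own fullness computation $\overline{UB^p(X)\,\mathfrak e\,UB^p(X)}=UB^p(X)$ already implies $\overline{\mathrm{span}}\,UB^p(X)^2=UB^p(X)$. The deduction of both parts of the corollary from the Morita equivalence (via \cite[Theorem 5.29]{Par09}, \cite[Th\'eor\`eme 1.2.8]{Laf02}, and the isometric isomorphism $B^p_u(\Gamma)\cong\ell^\infty(\Gamma)\rtimes_{\lambda,p}\Gamma$) is exactly as in the paper.
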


The Morita equivalence constructed in the proof of our main theorem induces an isomorphism from the $K$-theory of $UB^p(X)$ to the $K$-theory of $B^p_u(X)$.
The inverse homomorphism can in fact be induced from an algebra homomorphism $i_\mathcal{P}:B^p_u(X)\rightarrow UB^p(X)$, and we provide details of this in Section \ref{sect:inv}. This is analogous to the $p=2$ case in \cite[Proposition 4.8]{SW13}.

An $\ell^p$ coarse Baum-Connes assembly map has been considered in \cite{Chung23,SW,ZZ}. 
In Section \ref{sect:assembly}, using the results above, we define an $\ell^p$ uniform coarse assembly map taking values in the $K$-theory of the $\ell^p$ uniform Roe algebra.
We then relate this $\ell^p$ uniform coarse assembly map to the $\ell^p$ coarse Baum-Connes assembly map and show that it is not always surjective.


\section{Morita equivalence of Banach algebras} \label{sect:Mor}

In this section, we recall definitions that we need from \cite{Laf02} and \cite{Par09}.
All Banach algebras and Banach spaces in this paper will be complex.
A Banach algebra $B$ is called nondegenerate if the linear span of $BB$ is dense in $B$.

Since $B^p_u(X)$ is unital, it is clearly nondegenerate.
%
%
%

Although $UB^p(X)$ is non-unital, the dense subalgebra $U\mathbb{C}^p[X]$ has a left identity, which is sufficient for nondegeneracy of $UB^p(X)$.
Given $T=(T_{xy})_{x,y\in X}\in U\mathbb{C}^p[X]$ of propagation $M$, there exists $N\in\mathbb{N}$ such that $T_{xy}$ is of rank at most $N$ for all $x,y\in X$.
The dimension of the sum of ranges $\sum_{y\in X}\mathrm{im}\;T_{xy}$ is at most $MN$ for each $x\in X$.
For each $x\in X$, let $P_x$ be the projection of $\ell^p$ onto $\sum_{y\in X}\mathrm{im}\;T_{xy}$.
Then $P=\bigoplus_{x\in X}P_x$ belongs to $U\mathbb{C}^p[X]$, and $PT=T$, so $UB^p(X)$ is indeed a nondegenerate Banach algebra.

\begin{defn}
Let $B$ be a Banach algebra. A right (resp. left) Banach $B$-module is a Banach space $E$ with the structure of a right (resp. left) $B$-module such that $||xb||_E\leq||x||_E ||b||_B$ (resp. $||bx||_E\leq||b||_B||x||_E$) for all $x\in E$ and $b\in B$.

We say that $E$ is nondegenerate if the linear span of $EB$ (resp. $BE$) is dense in $E$. 
\end{defn}

\begin{defn}
Let $B$ be a Banach algebra, and let $E,F$ be right (resp. left) Banach $B$-modules.
A Banach $B$-module homomorphism from $E$ to $F$ is a continuous linear map $f:E\rightarrow F$ that is a $B$-module homomorphism in the algebraic sense, and we set $||f||=\sup_{x\in E,||x||=1}||f(x)||_F$.
\end{defn}

\begin{defn}
Let $B$ be a Banach algebra. A Banach $B$-pair is a pair $E=(E^<,E^>)$ such that
\begin{enumerate}
\item $E^<$ is a left Banach $B$-module,
\item $E^>$ is a right Banach $B$-module,
\item there is a $\mathbb{C}$-bilinear map $\langle\cdot,\cdot\rangle:E^<\times E^>\rightarrow B$ such that 
\begin{itemize}
\item $\langle be^<,e^> \rangle=b\langle e^<,e^> \rangle$,
\item $\langle e^<,e^>b \rangle=\langle e^<,e^> \rangle b$, and
\item $||\langle e^<,e^> \rangle||\leq||e^<|| ||e^>||$
\end{itemize}
for all $e^<\in E^<$, $e^>\in E^>$, and $b\in B$.
\end{enumerate}
We say that $E$ is nondegenerate if both $E^<$ and $E^>$ are nondegenerate. We say that $E$ is full if the linear span of $\langle E^<,E^> \rangle$ is dense in $B$.
\end{defn}

\begin{ex}
For any Banach algebra $B$, we have the standard $B$-pair $(B,B)$, where the module structures and bilinear map are given by the product in $B$.
\end{ex}

The notion of Banach pairs generalizes the notion of Hilbert $C^*$-modules over $C^*$-algebras.

\begin{ex} \label{ex:Cstar}
If $B$ is a $C^*$-algebra, and $E$ is a right Hilbert $B$-module with inner product $\langle\cdot,\cdot\rangle:E\times E\rightarrow B$, then $E$ is a nondegenerate right Banach $B$-module.
Let $E^*$ be the nondegenerate left Banach $B$-module given by an isometric $\mathbb{C}$-antilinear map $*:E\rightarrow E^*$ such that $b^*x^*=(xb)^*$ for $x\in E$ and $b\in B$.
Set $\langle x^*,y\rangle=\langle x,y \rangle$ for $x,y\in E$.
Then $(E^*,E)$ is a Banach $B$-pair \cite[Proposition 1.1.4]{Laf02}.
\end{ex}

\begin{defn}
Let $E$ and $F$ be Banach $B$-pairs. A linear operator from $E$ to $F$ is a pair $T=(T^<,T^>)$ such that 
\begin{enumerate}
\item $T^>:E^>\rightarrow F^>$ is a homomorphism of right Banach B-modules,
\item $T^<:F^<\rightarrow E^<$ is a homomorphism of left Banach $B$-modules,
\item $\langle f^<,T^>e^> \rangle_F=\langle T^<f^<,e^> \rangle_E$ for all $f^<\in F^<$ and $e^>\in E^>$.
\end{enumerate}

The space of all linear operators from $E$ to $F$ is denoted by $\mathcal{L}_B(E,F)$ and is a Banach space under the norm $||T||:=\max(||T^<||,||T^>||)$. We will write $\mathcal{L}_B(E)$ for $\mathcal{L}_B(E,E)$. We will sometimes omit the subscript and just write $\mathcal{L}(E,F)$.
\end{defn}

Note that if $G$ is another Banach $B$-pair, $T\in\mathcal{L}(E,F)$, and $S\in\mathcal{L}(F,G)$, then $S\circ T:=(T^<\circ S^<,S^>\circ T^>)\in\mathcal{L}(E,G)$, and $||S\circ T||\leq ||S|| ||T||$. Thus $\mathcal{L}(E)$ is a Banach algebra with unit $Id_E=(Id_{E^<},Id_{E^>})$.

\begin{ex}
For any Banach algebra $B$, each $b\in B$ acts as a linear operator on the standard $B$-pair with $b^<$ acting as right multiplication by $b$, and $b^>$ acting as left multiplication by $b$.
\end{ex}

\begin{defn}
Let $E$ and $F$ be Banach $B$-pairs. If $e^<\in E^<$ and $f^>\in F^>$, then the operator $\ket{f^>}\bra{e^<}\in\mathcal{L}_B(E,F)$ is defined by \[\ket{f^>}\bra{e^<}e^>=f^>\langle e^<,e^> \rangle\] and \[f^<\ket{f^>}\bra{e^<}=\langle f^<,f^> \rangle e^<\] for all $e^>\in E^>$ and $f^<\in F^<$.

We call such operators rank one operators. The closed linear span in $\mathcal{L}_B(E,F)$ of these rank one operators is denoted by $\mathcal{K}_B(E,F)$, and the elements of $\mathcal{K}_B(E,F)$ are called compact operators. We will write $\mathcal{K}_B(E)$ for $\mathcal{K}_B(E,E)$.
\end{defn}

\begin{defn}
Let $A$ and $B$ be Banach algebras. A Banach $A$-$B$-pair $E=(E^<,E^>)$ is a Banach $B$-pair endowed with a contractive homomorphism $\pi_A:A\rightarrow\mathcal{L}_B(E)$. In other words, $E^<$ is a Banach $B$-$A$-bimodule, $E^>$ is a Banach $A$-$B$-bimodule, and $\langle e^<a,e^> \rangle_B=\langle e^<,ae^> \rangle_B$ for all $e^<\in E^<$, $e^>\in E^>$, and $a\in A$.
\end{defn}


\begin{defn}
Let $B$ be a Banach algebra, let $E$ be a right Banach $B$-module, and let $F$ be a left Banach $B$-module. A $\mathbb{C}$-bilinear map $\beta$ from $E\times F$ to a Banach space $H$ is called $B$-balanced if $\beta(eb,f)=\beta(e,bf)$ for all $e\in E,f\in F,b\in B$.

A (projective) balanced tensor product of $E$ and $F$ is a Banach space $E\otimes_BF$ together with a $\mathbb{C}$-bilinear $B$-balanced map $\pi:E\times F\rightarrow E\otimes_BF$ of norm at most one with the property that for every Banach space $H$ and every $B$-balanced continuous map $\mu:E\times F\rightarrow H$, there is a unique linear map $\tilde{\mu}:E\otimes_BF\rightarrow H$ such that $||\mu||=||\tilde{\mu}||$ and $\mu=\tilde{\mu}\circ\pi$.
\end{defn}


Such a balanced tensor product can be constructed as a quotient of the usual projective tensor product (cf. \cite[p.11]{Laf02}), and is unique up to isometry.
If $E$ is a Banach $A$-$B$-bimodule, then $E\otimes_BF$ is a left Banach $A$-module. Moreover, if $E$ is nondegenerate as a left Banach $A$-module, then so is $E\otimes_BF$. Similar statements hold for right Banach module structures on $F$.

If $E'$ is another right Banach $B$-module, $F'$ is another left Banach $B$-module, $S\in\mathcal{L}_B(E,E')$, and $T\in {}_B\mathcal{L}(F,F')$, then there is a unique linear map $S\otimes T:E\otimes_B F\rightarrow E'\otimes_B F'$ such that 
\[(S\otimes T)(e\otimes f)=Se\otimes Tf\] for all $e\in E$ and $f\in F$. Moreover, $||S\otimes T||\leq ||S||||T||$.
If $F$ and $F'$ are Banach $B$-$C$-bimodules, and $T\in\mathcal{L}_C(F,F')$, then $S\otimes T$ is also $C$-linear.

\begin{defn}
Let $A,B,C$ be Banach algebras, let $E$ be a Banach $A$-$B$-pair, and let $F$ be a Banach $B$-$C$-pair. We define a Banach $A$-$C$-pair $E\otimes_B F$ by
\begin{enumerate}
\item $(E\otimes_B F)^>=E^>\otimes_B F^>$,
\item $(E\otimes_B F)^<=F^<\otimes_B E^<$,
\item $\langle \cdot,\cdot \rangle:F^<\otimes_B E^<\times E^>\otimes_B F^>\rightarrow C$, $\langle f^<\otimes e^<,e^>\otimes f^> \rangle=\langle f^<,\langle e^<,e^> \rangle f^> \rangle$.
\end{enumerate}
\end{defn}

In particular, when $E$ is just a Banach $B$-pair, we may take $A=\mathbb{C}$ and obtain a Banach $\mathbb{C}$-$C$-pair, which is just a Banach $C$-pair.

\begin{defn}
Let $A$ and $B$ be Banach algebras. A Morita equivalence between $A$ and $B$ is a pair $E=({}_B^{}E^<_A,{}_A^{}E^>_B)$ equipped with a bilinear pairing $\langle\cdot,\cdot\rangle_B:E^<\times E^>\rightarrow B$ and a bilinear pairing ${}_A\langle\cdot,\cdot\rangle:E^>\times E^<\rightarrow A$ such that
\begin{enumerate}
\item $(E^<,E^>)$ with $\langle\cdot,\cdot\rangle_B$ is a Banach $A$-$B$-pair,
\item $(E^>,E^<)$ with ${}_A\langle\cdot,\cdot\rangle$ is a Banach $B$-$A$-pair,
\item $\langle e^<,e^> \rangle_B f^<=e^< {}_A\langle e^>,f^< \rangle$ and $e^>\langle f^<,f^> \rangle_B={}_A\langle e^>,f^< \rangle f^>$ for all $e^<,f^<\in E^<$ and $e^>,f^>\in E^>$,
\item the pairs $(E^<,E^>)$ and $(E^>,E^<)$ are full and nondegenerate.
\end{enumerate}
$A$ and $B$ are said to be Morita equivalent if there is a Morita equivalence between $A$ and $B$.
\end{defn}

\begin{rem} \leavevmode
\begin{enumerate}
\item If $B$ is a nondegenerate Banach algebra, then the standard $B$-pair $(B,B)$ is a Morita equivalence between $B$ and itself.
\item If $E=(E^<,E^>)$ is a Morita equivalence between $A$ and $B$, then $\bar{E}=(E^>,E^<)$ is a Morita equivalence between $B$ and $A$, and is called the inverse Morita equivalence.
\item If $E$ is a Morita equivalence between $A$ and $B$, and $F$ is a Morita equivalence between $B$ and $C$, then $E\otimes_B F$ is a Morita equivalence between $A$ and $C$.
\end{enumerate}
Hence Morita equivalence is an equivalence relation on the class of nondegenerate Banach algebras.
\end{rem}

\begin{rem}
As shown in the proof of \cite[Proposition 5.21]{Par09}, if $E$ is a Morita equivalence between $A$ and $B$, then the image of the $A$-action $\pi:A\rightarrow\mathcal{L}_B(E)$ is contained in $\mathcal{K}_B(E)$, because $\pi({_A}\langle e^>,e^< \rangle)=\ket{e^>}\bra{e^<}$ for all $e^<\in E^<$ and $e^>\in E^>$.
\end{rem}

One may observe that the conditions in the above definition of Morita equivalence imply that the Banach algebras involved are nondegenerate. The definition of Morita equivalence was extended to include degenerate Banach algebras in \cite[Section 1.2]{Par15}. As the $\ell^p$ Roe-type algebras in the current paper are nondegenerate, we omit the more general definition.

This notion of Morita equivalence generalizes the notion of strong Morita equivalence for $C^*$-algebras due to Rieffel \cite{Rie74}.
\begin{ex}
If $A$ and $B$ are $C^*$-algebras, and $E$ is an $A$-$B$-imprimitivity bimodule (cf. \cite[Definition 3.1]{RW}), then the pair $(E^*,E)$ from Example \ref{ex:Cstar} is a Morita equivalence between $A$ and $B$.
\end{ex}

We include the next example for independent interest.

\begin{ex}
For any Banach space $X$, let $X^*$ denote the dual Banach space. Then the algebra $\mathcal{N}(X)$ of nuclear operators on $X$ is Morita equivalent to $\mathbb{C}$. 

Indeed, the $\mathcal{N}(X)$-$\mathbb{C}$-pair $(X^*,X)$ is a Morita equivalence with the pairings $\langle x^*,y \rangle_\mathbb{C}=x^*(y)$ and ${}_{\mathcal{N}(X)}\langle y,x^* \rangle=y\otimes x^*$ for $x^*\in X^*$ and $y\in X$, where $y\otimes x^*$ is the rank one operator on $X$ given by $(y\otimes x^*)(z)=x^*(z)y$ for $z\in X$.
The right $\mathcal{N}(X)$-module structure on $X^*$ is given by $x^*\cdot T=T^*x^*$ for $T\in\mathcal{N}(X)$ and $x^*\in X^*$.
\end{ex}

We refer the reader to \cite[Propositions 47.2 and 47.5]{Tre} for basic properties of nuclear operators used in the example above, i.e., the finite rank operators on $X$ are dense in $\mathcal{N}(X)$ under the nuclear norm, and the adjoint of a nuclear operator is nuclear.
If $X$ is a Hilbert space, then the nuclear operators are commonly referred to as the trace class operators.


The following example of a Morita equivalence will be used in the proof of our main theorem.

\begin{ex} \label{ex:compact}
If $E$ is a full and nondegenerate Banach $B$-pair, then $E$ is a Morita equivalence between $\mathcal{K}_B(E)$ and $B$.

Indeed, the bilinear pairing ${}_{\mathcal{K}_B(E)}\langle\cdot,\cdot\rangle:E^>\times E^<\rightarrow\mathcal{K}_B(E)$ is given by ${}_{\mathcal{K}_B(E)}\langle e^>,e^<\rangle=\ket{e^>}\bra{e^<}$, and it is straightforward to verify the properties required of a Morita equivalence.
\end{ex}

For any Banach algebra $A$, its suspension is the algebra \[SA=\{ f\in C([0,1],A):f(0)=f(1)=0 \}\cong C_0(\mathbb{R},A).\]
The suspension of a nondegenerate Banach algebra is nondegenerate.
Similarly, given a Morita equivalence $E=(E^<,E^>)$, we may define \[SE=(SE^<,SE^>).\]

The following example is useful in the context of $K$-theory.

\begin{ex}
If $E$ is a Morita equivalence between nondegenerate Banach algebras $A$ and $B$, then $SE$ is a Morita equivalence between $SA$ and $SB$.
\end{ex}

An important consequence of having a Morita equivalence between two Banach algebras is that the two algebras have the same $K$-theory.
In fact, \cite[Theorem 5.29]{Par09} states that if $A,B,C$ are nondegenerate Banach algebras, and $E$ is a Morita equivalence between $B$ and $C$, then $\cdot\otimes_B[E]$ is an isomorphism from $KK^{ban}(A,B)$ to $KK^{ban}(A,C)$ with inverse $\cdot\otimes_B[\bar{E}]$.
Setting $A=\mathbb{C}$, we get $K_0(B)\cong K_0(C)$ by \cite[Th\'{e}or\`{e}me 1.2.8]{Laf02}.
Then considering suspensions yields $K_1(B)\cong K_1(C)$. 
We provide more details in the rest of this section.

Given nondegenerate Banach algebras $A$ and $B$, classes in $KK^{ban}(A,B)$ are given by cycles in $\mathbb{E}^{ban}(A,B)$.
These are pairs $(E,T)$, where $E$ is a $\mathbb{Z}_2$-graded nondegenerate Banach $B$-pair on which $A$ acts on the left by even elements of $\mathcal{L}_B(E)$, and $T$ is an odd element of $\mathcal{L}_B(E)$ such that the operators $[a,T]=aT-Ta$ and $a(id_E-T^2)$ belong to $\mathcal{K}_B(E)$ for all $a\in A$.
The group $KK^{ban}(A,B)$ is defined to be the quotient of $\mathbb{E}^{ban}(A,B)$ by a certain homotopy relation (cf. \cite[D\'{e}finition 1.2.2]{Laf02}).
We omit the details of the definition as we do not need it in this paper.

Although Lafforgue's original definition of $KK^{ban}$ involves pairs of Banach modules, it can be equivalently defined using single Banach modules, as was done in \cite{Laf02a}. An unpublished note \cite{Par} by Paravicini explains the equivalence between the two approaches. 

The notion of Morita cycles generalizes Morita equivalences. In \cite[Section 5.3]{Par09}, homotopy classes of Morita cycles are called Morita morphisms, and they act on the right of $KK^{ban}$ (cf. \cite[Section 5.7]{Par09}).

Given nondegenerate Banach algebras $A$ and $B$, a Morita cycle $F$ from $A$ to $B$ is a nondegenerate Banach $A$-$B$-pair such that $A$ acts on $F$ as compact operators.
The class of all Morita cycles from $A$ to $B$ is denoted by $\mathbb{M}^{ban}(A,B)$.
Then $Mor^{ban}(A,B)$ is defined to be the quotient of $\mathbb{M}^{ban}(A,B)$ by a certain homotopy relation (cf. \cite[Definition 5.8]{Par09}).

Given nondegenerate Banach algebras $A,B,C$, let $(E,T)$ be an element of $\mathbb{E}^{ban}(A,B)$, and let $F$ be an element of $\mathbb{M}^{ban}(B,C)$.
Then define \[ (E,T)\otimes_B F=(E\otimes_B F,T\otimes 1)\in\mathbb{E}^{ban}(A,C).\]
This operation lifts to a product \[ \otimes_B:KK^{ban}(A,B)\times Mor^{ban}(B,C)\rightarrow KK^{ban}(A,C).\]
If $F$ is a Morita equivalence from $B$ to $C$, then $\cdot\otimes_B[F]$ is an isomorphism from $KK^{ban}(A,B)$ to $KK^{ban}(A,C)$, which we will denote by $F_*$.
In particular, when $A=\mathbb{C}$, we have an isomorphism from $K_0(B)$ to $K_0(C)$.

\section{Main result} \label{sect:main}

In this section, we prove our main result on the Morita equivalence of the two $\ell^p$ Roe-type algebras. This generalizes the $p=2$ case in \cite[Proposition 4.7]{SW13}.

\begin{thm} \label{thm:equivalence}
The Banach algebras $B^p_u(X)$ and $UB^p(X)$ are Morita equivalent for $1\leq p<\infty$.
\end{thm}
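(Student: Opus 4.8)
The plan is to realize $B^p_u(X)$ as the compact operators on a suitable full, nondegenerate Banach $UB^p(X)$-pair, and then invoke Example \ref{ex:compact}. The natural candidate for the module is built from the ``thin'' directions inside $\ell^p(X,\ell^p)$. Concretely, fix a rank-one projection $q\in B(\ell^p)$ onto a one-dimensional subspace, and let $Q=\bigoplus_{x\in X}q\in UB^p(X)$, a finite-propagation operator with each $Q_{xx}$ of rank one. Identifying the range of $Q$ with $\ell^p(X)$, one expects $Q\,UB^p(X)\,Q$ to be isometrically isomorphic to $B^p_u(X)$: an element of $U\mathbb{C}^p[X]$ compressed by $Q$ has matrix entries that are rank-one operators on a fixed line, hence scalars, with finite propagation, and the uniform finite-rank condition is automatic after compression. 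The corner $Q\,UB^p(X)$ is then a candidate for $E^>$ and $UB^p(X)\,Q$ for $E^<$, with the $B^p_u(X)$-valued pairing coming from the product $QTQ'\mapsto$ (its scalar matrix) and the $UB^p(X)$-valued pairing from $T Q Q' \mapsto TQQ'$.

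The key steps, in order, are as follows. First I would make precise the isometric identification $B^p_u(X)\cong Q\,UB^p(X)\,Q$; the containment $\supseteq$ uses that $QU\mathbb{C}^p[X]Q$ consists of finite-propagation scalar matrices bounded on $\ell^p(X)$, i.e.\ elements of $\mathbb{C}_u^p[X]$, and density plus the fact that compression is contractive gives the norm-closure statement, while $\subseteq$ comes from embedding $\mathbb{C}_u^p[X]$ into $U\mathbb{C}^p[X]$ by tensoring with $q$. (For $p\neq 2$ one must be a little careful that compressions by $Q$ are isometric on the relevant subspaces; since $Q$ is a block-diagonal $\ell^p$-contraction onto a complemented subspace isometric to $\ell^p(X)$, the compression is a contraction and is isometric on $Q\,UB^p(X)\,Q$, which is what is needed.) Second, I would set $E^> = Q\cdot UB^p(X)$ as a right Banach $UB^p(X)$-module and $E^< = UB^p(X)\cdot Q$ as a left module, define $\langle S Q, Q T\rangle_{UB^p(X)} = SQ\cdot QT = SQT\in UB^p(X)$ and check the Banach-pair axioms — bilinearity, module-compatibility, and the contractivity $\|SQT\|\le\|SQ\|\,\|QT\|$, which follows from submultiplicativity. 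Third, I would verify that this pair is full over $UB^p(X)$: the span of $\langle E^<,E^>\rangle$ contains $UB^p(X)\,Q\,UB^p(X)$, and because $Q$ has finite propagation and $\bigoplus_x Q_x = Q$ acts as a ``partial identity'' one shows, using the argument already given in the excerpt for nondegeneracy (producing projections $P_x$ with $PT=T$), that $UB^p(X)\,Q\,UB^p(X)$ is dense in $UB^p(X)$. Fourth, I would identify $\mathcal{K}_{UB^p(X)}(E)$ with $B^p_u(X)$: rank-one operators $\ket{QT}\bra{SQ}$ act on $E^>$ by $QT' \mapsto QT\langle SQ, QT'\rangle = QTSQT'$, i.e.\ by left multiplication by the element $QTSQ\in Q\,UB^p(X)\,Q\cong B^p_u(X)$, and conversely every element of $B^p_u(X)\cong Q\,UB^p(X)\,Q$ is a norm-limit of such, using the identification in step one; this gives an isometric isomorphism $\mathcal{K}_{UB^p(X)}(E)\cong B^p_u(X)$. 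Finally, Example \ref{ex:compact} yields that $E$ is a Morita equivalence between $\mathcal{K}_{UB^p(X)}(E)\cong B^p_u(X)$ and $UB^p(X)$, completing the proof; for completeness I would spell out the two pairings and check condition (iii) of the definition of Morita equivalence, namely $\langle e^<,e^>\rangle_B f^< = e^<\,{}_A\langle e^>,f^<\rangle$.

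The main obstacle I anticipate is the $p\neq 2$ analysis: unlike in the Hilbert-space case of \cite[Proposition 4.7]{SW13}, there is no adjoint available, so I cannot use a $C^*$-style inner product, and I must instead check each Banach-pair and Morita-equivalence axiom by hand, paying particular attention to the norms. The delicate points are (a) showing the compression map $T\mapsto QTQ$ is \emph{isometric} (not merely contractive) on the corner — here the block-diagonal structure of $Q$ and the fact that each $q$ is a rank-one $\ell^p$-norm-one idempotent onto a coordinate line should suffice, but it needs care, especially regarding which one-dimensional subspace of $\ell^p$ one picks so that $Q$ is contractive; taking $q$ to be a coordinate projection makes $Q$ an honest norm-one idempotent with complemented, isometrically-$\ell^p(X)$ range; (b) the density of $UB^p(X)\,Q\,UB^p(X)$ in $UB^p(X)$, where one reduces to the dense subalgebra $U\mathbb{C}^p[X]$ and, given $T$ of propagation $M$ with ranks bounded by $N$, writes $T = P_L\, T\, P_R$ for finite-rank block-diagonal projections $P_L,P_R$ that factor through finitely many copies of $q$ after an $O(MN)$-dimensional change of basis in each fiber — this is the step where the ``uniform finite rank'' hypothesis is essential. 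Once these are in place the remaining verifications are routine bookkeeping, and the $K$-theory consequences follow immediately from the results recalled in Section \ref{sect:Mor}.
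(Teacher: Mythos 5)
Your proposal is correct in outline, but it packages the argument differently from the paper: you realize $B^p_u(X)$ as the full corner $Q\,UB^p(X)\,Q$ cut out by the block-diagonal rank-one idempotent $Q$ and then invoke Example \ref{ex:compact} for the pair $(UB^p(X)Q,\;Q\,UB^p(X))$, whereas the paper constructs the equivalence bimodule directly as the completions of the spaces of finite-propagation matrices with uniformly bounded entries in $\ell^p$ (resp.\ $\ell^q$), viewed as operators between $\ell^p(X)$ and $\ell^p(X,\ell^p)$, writes down both pairings explicitly, and verifies the four axioms of the definition of Morita equivalence by hand. The two bimodules are in fact isometrically the same: identifying $TQ$ with the matrix $[T_{xy}v]$ of $\ell^p$-entries and $QT$ with $[v'\circ T_{xy}]$ of $\ell^q$-entries recovers the paper's $E^>$ and $E^<$, and your $Q$ is precisely the idempotent $\mathcal{P}$ that the paper introduces in Section \ref{sect:inv}, where the lemmas $B^p_u(X)\otimes_{i_\mathcal{P}}UB^p(X)\cong\mathcal{P}UB^p(X)$ and $\pi(\mathcal{P})E\cong B^p_u(X)$ are exactly your corner identifications. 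What your route buys is that the isomorphism $\mathcal{K}_{UB^p(X)}(E)\cong B^p_u(X)$ and hence the equivalence come essentially for free from Example \ref{ex:compact} once fullness is established (note the isometry of the left-multiplication representation follows from evaluating at $Q\in E^>$, since $aQ=a$ on the corner), and the inverse map on $K$-theory in Section \ref{sect:inv} becomes nearly tautological; what the paper's route buys is the explicit matrix formulas for both pairings that it reuses later. In both approaches the only non-formal input is the same, and you correctly isolate it: the density of $\mathrm{span}\,UB^p(X)\,Q\,UB^p(X)$ in $UB^p(X)$. Do make sure there that the block-diagonal finite-rank idempotents and their rank-one factors are \emph{uniformly} bounded in norm (bounded geometry plus the uniform rank bound give fibers of dimension at most $N_RN$, and Auerbach bases or the Kadec--Snobar bound supply the uniform control; only a left identity $P_LT=T$ is actually needed), and add the short check that the modules $Q\,UB^p(X)$ and $UB^p(X)Q$ are nondegenerate, which follows from the nondegeneracy of $UB^p(X)$ established in Section \ref{sect:Mor}.
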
 

\begin{proof} 

Let $1/p+1/q=1$.
Let $E^>_{alg}$ be the set of all finite propagation $X\times X$ matrices with uniformly bounded entries in $\ell^p$, to be regarded as operators in $B(\ell^p(X),\ell^p(X,\ell^p))$ via the formula $(T\eta)_x=\sum_{y\in X}T_{xy}\eta_y$ for $T\in E^>_{alg}$, $\eta\in\ell^p(X)$, and $x\in X$.
Similarly, let $E^<_{alg}$ be the set of all finite propagation $X\times X$ matrices with uniformly bounded entries in $\ell^q$, to be regarded as operators in $B(\ell^p(X,\ell^p),\ell^p(X))$ via the formula $(S\xi)_x=\sum_{y\in X}\langle S_{xy},\xi_y \rangle$ for $S\in E^<_{alg}$, $\xi\in\ell^p(X,\ell^p)$, and $x\in X$.
Let $E^>$ and $E^<$ be the completions of $E^>_{alg}$ and $E^<_{alg}$ in the respective operator norms.

For $e^>\in E^>_{alg}$ and $a\in\mathbb{C}^p_u[X]$, one sees that $e^>a\in E^>_{alg}$, and $||e^>a||\leq||e^>|| ||a||$.
Similarly, for $e^<\in E^<_{alg}$ and $a\in\mathbb{C}^p_u[X]$, one sees that $ae^<\in E^<_{alg}$, and $||ae^<||\leq||a|| ||e^<||$.
Thus, setting $e^>\cdot a=e^>a$ makes $E^>$ a right Banach $B^p_u(X)$-module, while setting $a\cdot e^<=ae^<$ makes $E^<$ a left Banach $B^p_u(X)$-module. 
Since $B^p_u(X)$ is unital, $E^>$ and $E^<$ are nondegenerate $B^p_u(X)$-modules.

For $e^<\in E^<_{alg}$ and $e^>\in E^>_{alg}$, one sees that $e^<e^>\in\mathbb{C}_u^p[X]$ with the $(y,z)$-entry of $e^<e^>$ given by the sum of duality pairings $\sum_{w\in X}\langle e^<_{yw},e^>_{wz}\rangle$, and $||e^<e^>||\leq ||e^<|| ||e^>||$. 
Moreover, for $a\in B^p_u(X)$, we have
\begin{align*}
(a\cdot e^<)e^> &= (ae^<)e^>=a(e^<e^>), \\
e^<(e^>\cdot a) &= e^<(e^>a)=(e^<e^>)a.
\end{align*}
Thus, setting $\langle e^<,e^> \rangle_{B^p_u(X)}=e^<e^>$ makes $E=(E^<,E^>)$ a Banach $B^p_u(X)$-pair.

Any operator in $\mathbb{C}^p_u[X]$ can be written as a finite sum of operators of the form $\zeta\cdot t$, where $\zeta\in\ell^\infty(X)$ and $t$ is a partial translation, i.e., all nonzero matrix entries of $t$ are 1 and there is at most one nonzero entry in each row and in each column (cf. \cite[Definition 2.9]{Chung21a}).
Choose a unit vector $v\in\ell^p$, and define $t_v$ to be the matrix of $t$ with each 1 replaced by $v$. Then $t_v\in E^>_{alg}$.
Let $v'\in\ell^q$ be a unit vector such that $\langle v',v \rangle=1$ (via Hahn-Banach), and define $\zeta_{v'}$ by $(\zeta_{v'})_{yy}=\zeta_yv'$ and $(\zeta_{v'})_{xy}=0$ if $x\neq y$. Then $\zeta_{v'}\in E^<_{alg}$.
Now $\langle \zeta_{v'},t_v \rangle_{B^p_u(X)}=\zeta\cdot t$, and hence $E$ is full.


Consider the map $\pi:U\mathbb{C}^p[X]\rightarrow\mathcal{L}_{B^p_u(X)}(E)$ given by
\[ \pi(T)^>e^>=Te^>,\; \pi(T)^<e^<=e^<T. \] 
While both formulas are given by matrix multiplication, the matrix entries of $Te^>$ are obtained by letting the entries of $T$ act on the entries of $e^>$, whereas the matrix entries of $e^<T$ are obtained by composing the entries of $e^<$ with the entries of $T$ to get bounded linear functionals on $\ell^p$ and thus elements of $\ell^q$.

We have 
\begin{align*}
\pi(T)^>(e^>\cdot a)=T(e^>a)&=(Te^>)a=(\pi(T)^>e^>)\cdot a, \\
\pi(T)^<(a\cdot e^<)=(ae^<)T&=a(e^<T)=a\cdot(\pi(T)^<e^<), \\
\langle e^<,\pi(T)^>e^> \rangle &= e^<(\pi(T)^>e^>)=e^<Te^>, \\
\langle \pi(T)^<e^<,e^> \rangle &=(\pi(T)^<e^<)e^>=e^<Te^>, \\
\pi(T_1T_2)^> &= \pi(T_1)^>\pi(T_2)^>, \\
\pi(T_1T_2)^< &= \pi(T_2)^<\pi(T_1)^<,
\end{align*}
so $\pi$ is a well-defined homomorphism.
Moreover, \[ ||\pi(T)||=\max(||\pi(T)^<||,||\pi(T)^>||)\leq||T|| \] 
so $\pi$ extends to a contractive homomorphism $\pi:UB^p(X)\rightarrow\mathcal{L}_{B^p_u(X)}(E)$.
Hence $E$ is a Banach $UB^p(X)$-$B^p_u(X)$-pair.

For $e^>\in E_{alg}^>$ and $e^<\in E_{alg}^<$, let $e^>e^<$ be the matrix with entries $(e^>e^<)_{xy}=\sum_z (e^>)_{xz}\otimes(e^<)_{zy}\in\ell^p\otimes\ell^q$.
Thus the entries of $e^>e^<$ are rank one operators on $\ell^p$, and they are uniformly bounded.
Moreover, we have $||e^>e^<||\leq||e^>||||e^<||$.
For $b\in UB^p(X)$, we have $(be^>)e^<=b(e^>e^<)$ and $e^>(e^<b)=(e^>e^<)b$.
Thus, setting ${_{UB^p(X)}}\langle e^>,e^< \rangle=e^>e^<$ makes $(E^>,E^<)$ a Banach $B^p_u(X)$-$UB^p(X)$-pair.
We also have $\langle e^<,e^> \rangle_{B^p_u(X)}f^<=e^<{_{UB^p(X)}}\langle e^>,f^< \rangle$ and $e^>\langle f^<,f^> \rangle_{B^p_u(X)}={_{UB^p(X)}}\langle e^>,f^< \rangle f^>$ for $e^<,f^<\in E^<$ and $e^>,f^>\in E^>$.

To show that the pair $(E^>,E^<)$ is full, note that every $T\in U\mathbb{C}^p[X]$ is a finite sum of operators of finite propagation and rank one matrix entries.
Each finite propagation operator with rank one matrix entries can further be written as a finite sum of operators of the form $\zeta\cdot t$, where $\zeta$ is a diagonal matrix with rank one entries and $t$ is a partial translation.
Such operators belong to ${_{UB^p(X)}}\langle E^>,E^< \rangle$, and hence $(E^>,E^<)$ is full.

To show that the $UB^p(X)$-action on $E^>$ is nondegenerate, choose a unit vector $v\in\ell^p$ and let $v'\in\ell^q$ be a unit vector such that $\langle v',v \rangle=1$.
Let $f^>$ be the diagonal matrix with all diagonal entries equal to $v$, and let $f^<$ be the diagonal matrix with all diagonal entries equal to $v'$.
Then for any $e^>\in E^>$, we have $e^>=e^>I=(e^>f^<)f^>\in UB^p(X)\cdot E^>$.
Similarly, the $UB^p(X)$-action on $E^<$ is nondegenerate.

Hence $E$ is a Morita equivalence between $UB^p(X)$ and $B^p_u(X)$.

%
%
\end{proof}

The following is an immediate consequence by applying \cite[Theorem 5.29]{Par09} and \cite[Th\'{e}or\`{e}me 1.2.8]{Laf02}.

\begin{cor} \label{cor:K}
For any nondegenerate Banach algebra $A$, and $1\leq p<\infty$, \[ KK^{ban}(A,UB^p(X))\cong KK^{ban}(A,B^p_u(X)). \]
In particular, $B^p_u(X)$ and $UB^p(X)$ have the same $K$-theory for $1\leq p<\infty$. 
\end{cor}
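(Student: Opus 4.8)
The plan is to construct an explicit Morita equivalence bimodule/pair between $B^p_u(X)$ and $UB^p(X)$, mimicking the $C^*$-algebraic construction of \v{S}pakula and Willett \cite[Proposition 4.7]{SW13} but working with Banach pairs rather than Hilbert modules, since for $p\neq 2$ there is no inner product. Concretely, I would take the pair $E=(E^<,E^>)$ where $E^>$ consists of finite-propagation $X\times X$ matrices with uniformly bounded entries in $\ell^p$, viewed as operators $\ell^p(X)\to\ell^p(X,\ell^p)$, and $E^<$ consists of finite-propagation matrices with uniformly bounded entries in $\ell^q$ (with $1/p+1/q=1$), viewed as operators $\ell^p(X,\ell^p)\to\ell^p(X)$; one completes both in the operator norm. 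The right $B^p_u(X)$-module structure on $E^>$ and left structure on $E^<$ are by matrix multiplication, and the $B^p_u(X)$-valued pairing $\langle e^<,e^>\rangle=e^<e^>$ lands in $\mathbb{C}^p_u[X]$. Symmetrically, $UB^p(X)$ acts on $E$ via $\pi(T)^>e^>=Te^>$ and $\pi(T)^<e^<=e^<T$, and the $UB^p(X)$-valued pairing is ${}_{UB^p(X)}\langle e^>,e^<\rangle=e^>e^<$, whose matrix entries $\sum_z (e^>)_{xz}\otimes(e^<)_{zy}$ are uniformly bounded rank-one operators on $\ell^p$, hence land in $U\mathbb{C}^p[X]$.

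The verification then breaks into the four defining conditions of a Morita equivalence. Conditions (i) and (ii) — that $(E^<,E^>)$ is a Banach $UB^p(X)$-$B^p_u(X)$-pair and $(E^>,E^<)$ a Banach $B^p_u(X)$-$UB^p(X)$-pair — are straightforward bookkeeping: check the module inequalities $\|e^>a\|\le\|e^>\|\|a\|$ etc., the contractivity $\|e^<e^>\|\le\|e^<\|\|e^>\|$ of the pairings, and that $\pi$ is a contractive homomorphism (the bound $\|\pi(T)\|\le\|T\|$ follows since $Te^>$ and $e^<T$ are literally compositions of operators). Condition (iii), the compatibility $\langle e^<,e^>\rangle_{B^p_u(X)}f^<=e^<\,{}_{UB^p(X)}\langle e^>,f^<\rangle$ and its mirror, is again a direct matrix computation, both sides equalling $e^<e^>f^<$ read appropriately. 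The only slightly delicate point is \textbf{fullness and nondegeneracy}, condition (iv). For fullness of $(E^<,E^>)$ one uses the structure theory of $\mathbb{C}^p_u[X]$: every element is a finite sum of operators $\zeta\cdot t$ with $\zeta\in\ell^\infty(X)$ and $t$ a partial translation (cf. \cite[Definition 2.9]{Chung21a}); picking a unit vector $v\in\ell^p$ and a norming functional $v'\in\ell^q$ with $\langle v',v\rangle=1$, one realizes $\zeta\cdot t=\langle\zeta_{v'},t_v\rangle_{B^p_u(X)}$. Fullness of $(E^>,E^<)$ is analogous, now writing elements of $U\mathbb{C}^p[X]$ as finite sums of operators with rank-one matrix entries, each of the form $\zeta\cdot t$ with $\zeta$ diagonal with rank-one entries. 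Nondegeneracy of the $B^p_u(X)$-modules is immediate since that algebra is unital; nondegeneracy of the $UB^p(X)$-modules uses the diagonal element $f^>$ (all diagonal entries $v$) and $f^<$ (all diagonal entries $v'$) to write $e^>=(e^>f^<)f^>$, exhibiting $e^>\in UB^p(X)\cdot E^>$.

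The main obstacle — and the place where the Banach-space setting genuinely differs from the Hilbert-space one — is ensuring that all the pairings and actions land in the \emph{right} algebras and modules with the correct norm bounds, without the luxury of an inner product or adjoints. Specifically, one must check that $e^<T$ for $T\in UB^p(X)$ again has $\ell^q$-valued entries (this is where one composes the $\ell^q$-valued entries of $e^<$ with the $\ell^p$-operator entries of $T$ to get bounded functionals on $\ell^p$), that $e^>e^<$ has uniformly bounded rank-one entries so as to lie in $U\mathbb{C}^p[X]$, and that the propagation stays finite at the algebraic level so the completions behave. Once the bimodule $E$ is in hand, Corollary \ref{cor:K} is purely formal: apply \cite[Theorem 5.29]{Par09} with $A$ an arbitrary nondegenerate Banach algebra and $E$ the Morita equivalence just constructed to get $KK^{ban}(A,UB^p(X))\cong KK^{ban}(A,B^p_u(X))$, then specialize $A=\mathbb{C}$ and invoke \cite[Th\'{e}or\`{e}me 1.2.8]{Laf02} to identify these groups with $K_0$; the statement for $K_1$ follows by replacing $E$ with its suspension $SE$, a Morita equivalence between $SB^p_u(X)$ and $SUB^p(X)$.
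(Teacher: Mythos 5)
Your proposal is correct and follows essentially the same route as the paper: the corollary itself is the formal application of \cite[Theorem 5.29]{Par09} and \cite[Th\'eor\`eme 1.2.8]{Laf02} (plus suspension for $K_1$) to the Morita equivalence of Theorem \ref{thm:equivalence}, and your construction of that equivalence — the pair $(E^<,E^>)$ of finite-propagation matrices with $\ell^q$- and $\ell^p$-valued entries, the two pairings, the fullness argument via partial translations, and the nondegeneracy via the diagonal elements $f^<,f^>$ — is exactly the paper's. The only minor quibble is that the entries $\sum_z(e^>)_{xz}\otimes(e^<)_{zy}$ of $e^>e^<$ are uniformly bounded \emph{finite-rank} (not rank-one) operators, which is all that is needed for membership in $U\mathbb{C}^p[X]$.
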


A case that may be of particular interest is when $X$ is a countable discrete group.
Given a countable discrete group $\Gamma$, one can always equip $\Gamma$ with a proper left-invariant metric that is unique up to coarse equivalence \cite[Lemma 2.1]{Tu01}.
For $p\in[1,\infty)$, we may represent elements of $\ell^\infty(\Gamma)$ as multiplication operators on $\ell^p(\Gamma)$, and consider the left translation action of $\Gamma$ on $\ell^\infty(\Gamma)$. Then one can define an $L^p$ reduced crossed product $\ell^\infty(\Gamma)\rtimes_{\lambda,p}\Gamma$ just as how one defines the reduced crossed product $C^\ast$-algebra (cf. \cite[Definition 4.1.4]{BO} and \cite[Remark 2.7]{Chung21}). Then the proof of \cite[Proposition 5.1.3]{BO} shows that $\ell^\infty(\Gamma)\rtimes_{\lambda,p}\Gamma$ is isometrically isomorphic to $B^p_u(\Gamma)$.

\begin{cor} \label{cor:group}
For any countable discrete group $\Gamma$, and $1\leq p<\infty$, the Banach algebras $\ell^\infty(\Gamma)\rtimes_{\lambda,p}\Gamma$ and $UB^p(\Gamma)$ are Morita equivalent, and thus have the same $K$-theory.
\end{cor}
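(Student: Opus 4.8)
The plan is to reduce the statement to Theorem~\ref{thm:equivalence}, using two elementary observations: that $\Gamma$ equipped with a proper left-invariant metric is a metric space with bounded geometry, and that Morita equivalence is transported along isometric isomorphisms of Banach algebras.

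First I would fix, using \cite[Lemma 2.1]{Tu01}, a proper left-invariant metric $d$ on $\Gamma$. Since $d$ is proper and $\Gamma$ is discrete, every closed ball in $(\Gamma,d)$ is finite; left-invariance then shows that for each $R\geq 0$ the ball of radius $R$ about an arbitrary $g\in\Gamma$ is the left translate $g\cdot B_R(e)$ of the ball about the identity, hence has cardinality $|B_R(e)|=:N_R$ independent of $g$. Thus $(\Gamma,d)$ has bounded geometry, and Theorem~\ref{thm:equivalence} applies to give a Morita equivalence $E=(E^<,E^>)$ between $UB^p(\Gamma)$ and $B^p_u(\Gamma)$.

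Next I would invoke the isometric isomorphism $\varphi\colon \ell^\infty(\Gamma)\rtimes_{\lambda,p}\Gamma \xrightarrow{\ \sim\ } B^p_u(\Gamma)$ obtained from the argument of \cite[Proposition 5.1.3]{BO} as recalled before the corollary. Given any isometric algebra isomorphism $\varphi\colon A\to A'$ and a Morita equivalence $E$ between $A'$ and $B$, one produces a Morita equivalence between $A$ and $B$ by letting $A$ act on $E$ through $\varphi$, i.e. by replacing $\pi_{A'}$ with $\pi_{A'}\circ\varphi$ and the $A'$-valued pairing ${}_{A'}\langle\cdot,\cdot\rangle$ with $\varphi^{-1}\circ{}_{A'}\langle\cdot,\cdot\rangle$, while leaving the two $B$-module structures and the $B$-valued pairing untouched; every axiom in the definition of a Morita equivalence is inherited verbatim precisely because $\varphi$ is an isometric isomorphism (in particular fullness and nondegeneracy of the $A$-actions are equivalent to the corresponding properties for the $A'$-actions). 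Applying this with $A=\ell^\infty(\Gamma)\rtimes_{\lambda,p}\Gamma$, $A'=B^p_u(\Gamma)$, and $B=UB^p(\Gamma)$ yields the asserted Morita equivalence.

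Finally, the $K$-theory claim is immediate: a Morita equivalence $F$ induces, as recalled in Section~\ref{sect:Mor}, an isomorphism $F_*$ on $KK^{ban}(\mathbb{C},\cdot)$ and hence on $K$-theory (this is also exactly the content of Corollary~\ref{cor:K} applied to the present situation), so $K_*(\ell^\infty(\Gamma)\rtimes_{\lambda,p}\Gamma)\cong K_*(UB^p(\Gamma))$. There is no substantial obstacle in this argument: the entire mathematical content sits in Theorem~\ref{thm:equivalence} and in the isometric identification of \cite{BO}, and the only points demanding a modicum of care are the verification of bounded geometry for $(\Gamma,d)$ and the routine check that the Morita axioms survive transport along $\varphi$. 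In effect the corollary is a formality once the main theorem is in hand.
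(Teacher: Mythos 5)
Your proposal is correct and follows exactly the route the paper takes: the corollary is obtained by combining Theorem~\ref{thm:equivalence} (applicable since $\Gamma$ with a proper left-invariant metric has bounded geometry) with the isometric isomorphism $\ell^\infty(\Gamma)\rtimes_{\lambda,p}\Gamma\cong B^p_u(\Gamma)$ recalled just before the statement. The only difference is that you spell out the routine transport of a Morita equivalence along an isometric isomorphism, which the paper leaves implicit.
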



\section{The inverse homomorphism on $K$-theory} \label{sect:inv}

By \cite[Theorem 5.29]{Par09}, the Morita equivalence $E$ from Theorem \ref{thm:equivalence} induces an isomorphism $E_*$ given by $\cdot\otimes[E]$, i.e.,
\[ K_0(UB^p(X))\cong KK^{ban}(\mathbb{C},UB^p(X)) \stackrel{\cdot\otimes[E]}{\rightarrow} KK^{ban}(\mathbb{C},B^p_u(X))\cong K_0(B^p_u(X)) \]
and similarly for $K_1$ by considering suspensions. 
The inverse is given by $\cdot\otimes[\bar{E}]$ but we can give a more concrete description, as was done for the $p=2$ case in \cite[Proposition 4.8]{SW13}.
To do so, we need to recall the identification of $K_0(A)$ with $KK^{ban}(\mathbb{C},A)$ given by Lafforgue \cite[Th\'{e}or\`{e}me 1.2.8]{Laf02} for any nondegenerate Banach algebra $A$.


For a not necessarily unital Banach algebra $A$, its unitization is the Banach algebra $\tilde{A}=\{(a,\lambda):a\in A,\lambda\in\mathbb{C}\}$ with product defined by $(a,\lambda)(b,\mu)=(ab+\lambda b+\mu a,\lambda\mu)$ and norm defined by $||(a,\lambda)||=||a||+|\lambda|$. There is a canonical inclusion $A\rightarrow\tilde{A}$ where $a\mapsto(a,0)$ for $a\in A$. 

Given a nondegenerate Banach algebra $A$ and $x\in K_0(A)$, pick an idempotent $q\in M_{m+n}(\tilde{A})$ so that $x=[q]-[1_m]$ and $q-1_m\in M_{m+n}(A)$.
Let $q_0=q$, $q_1=1_m$, $E_0=(A^{m+n}q_0,q_0A^{m+n})$, and $E_1=(A^{m+n}q_1,q_1A^{m+n})$.
Here $A^{m+n}$ denotes the $(m+n)$-fold direct sum $A\oplus\cdots\oplus A$ with norm given by $||(a_1,\ldots,a_{m+n})||=||a_1||+\cdots+||a_{m+n}||$.
The pairings in $E_0$ and $E_1$ are given by $\langle (a_i),(b_i) \rangle=\sum_{i=1}^{m+n}a_ib_i$.

Let $i_0=(i_0^<,i_0^>):E_0\rightarrow A^{m+n}$ be such that $i_0^>:q_0A^{m+n}\rightarrow A^{m+n}$ is the canonical inclusion while $i_0^<:A^{m+n}\rightarrow A^{m+n}q_0$ is the canonical projection.
Let $\pi_0=(\pi_0^<,\pi_0^>):A^{m+n}\rightarrow E_0$ be such that $\pi_0^>:A^{m+n}\rightarrow q_0A^{m+n}$ is the canonical projection while $\pi_0^<:A^{m+n}q_0\rightarrow A^{m+n}$ is the canonical inclusion.
Similarly define $i_1$ and $\pi_1$ corresponding to $q_1$.

The pair $\left(E_0\oplus E_1,\begin{pmatrix} 0 & \pi_0i_1 \\ \pi_1i_0 & 0 \end{pmatrix}\right)$ belongs to $\mathbb{E}^{ban}(\mathbb{C},A)$. 
Its class in $KK^{ban}(\mathbb{C},A)$ is independent of the choice of $q$, and is denoted by $\zeta(x)$.
The map $\zeta:K_0(A)\rightarrow KK^{ban}(\mathbb{C},A)$ is a group isomorphism \cite[Th\'{e}or\`{e}me 1.2.8]{Laf02}.


In the rest of this section, we shall define a homomorphism from $B^p_u(X)$ to $UB^p(X)$, and show that the homomorphism it induces on $K$-theory is the inverse of $E_*$.

We may identify $\ell^p$ with $\ell^p(X)$ isometrically since $X$ is countable, and thus we also identify $\ell^p(X,\ell^p)$ with $\ell^p(X,\ell^p(X))$. 
For $x\in X$, denote by $e_x$ the standard basis vector in $\ell^p(X)$ corresponding to $x$, and denote by $e_x^*$ the standard basis vector in the dual space $\ell^q(X)$ (where $1/p+1/q=1$) corresponding to $x$. 

For $x,y\in X$, define $e_{xy}=e_x\otimes e_y^*\in B(\ell^p(X))$, which maps $\eta\in\ell^p(X)$ to $e_y^*(\eta)e_x\in\ell^p(X)$. 
Note that $e_{xz}e_{zy}=e_{xy}$ for $x,y,z\in X$.
Let $\mathcal{P}$ be the $X\times X$ diagonal matrix with $\mathcal{P}_{xx}=e_{xx}$ for all $x\in X$.

For $T\in\mathbb{C}^p_u[X]$, define \[i_\mathcal{P}(T)_{xy}=T_{xy}e_{xy}.\]

Then $i_\mathcal{P}(T)$ has finite propagation, and $i_\mathcal{P}(T)_{xy}$ has rank at most one for all $x,y\in X$, so we may regard $i_\mathcal{P}(T)$ as an element of $U\mathbb{C}^p[X]$.
Moreover, $i_\mathcal{P}(I)=\mathcal{P}$.

For $S,T\in\mathbb{C}^p_u[X]$ and $x,y\in X$, we have 
\begin{align*}
i_\mathcal{P}(ST)_{xy} &= (ST)_{xy}e_{xy} = \sum_{z\in X}S_{xz}T_{zy}e_{xy} \\
&= \sum_{z\in X}S_{xz}T_{zy}e_{xz}e_{zy} \\ 
&= \sum_{z\in X}S_{xz}e_{xz}T_{zy}e_{zy} \\
&= \sum_{z\in X}i_\mathcal{P}(S)_{xz}i_\mathcal{P}(T)_{zy} \\
&= (i_\mathcal{P}(S)i_\mathcal{P}(T))_{xy}.
\end{align*}
Hence $i_\mathcal{P}:\mathbb{C}^p_u[X]\rightarrow U\mathbb{C}^p[X]$ is a homomorphism.

For $\xi=(\xi_x)_{x\in X}\in\ell^p(X,\ell^p)\cong\ell^p(X,\ell^p(X))$, we have for each $x\in X$
\[(i_\mathcal{P}(T)\xi)_x=\sum_{y\in X}T_{xy}e_{xy}\xi_y=\sum_{y\in X}T_{xy}e_y^*(\xi_y)e_x,\] so
\begin{align*}
||i_\mathcal{P}(T)\xi||_p^p &= \sum_{x\in X}||\sum_{y\in X}T_{xy}e_y^*(\xi_y)e_x||_p^p \\
&= \sum_{x\in X}|\sum_{y\in X}T_{xy}e_y^*(\xi_y)|^p \\
&= ||T(e_y^*(\xi_y))_{y\in X}||_p^p \\
&\leq ||T||^p||(e_y^*(\xi_y))_{y\in X}||_p^p \\
&\leq ||T||^p||\xi||_p^p.
\end{align*}
This shows that $i_\mathcal{P}:\mathbb{C}^p_u[X]\rightarrow U\mathbb{C}^p[X]$ is contractive, and thus extends to a contractive homomorphism
\[ i_\mathcal{P}:B^p_u(X)\rightarrow UB^p(X). \]

We shall show that $(i_\mathcal{P})_*:K_*(B^p_u(X))\rightarrow K_*(UB^p(X))$ is the inverse of $E_*$, for which it suffices to show that $E_*\circ(i_\mathcal{P})_*=id_{K_*(B^p_u(X))}$.

\begin{lem}
$B^p_u(X)\otimes_{i_\mathcal{P}}UB^p(X)$ and $\mathcal{P}UB^p(X)$ are isomorphic as Banach $UB^p(X)$-pairs.
Under this isomorphism, any linear operator of the form $T\otimes 1\in\mathcal{L}(B^p_u(X)\otimes_{i_\mathcal{P}}UB^p(X))$, where $T\in B^p_u(X)$, corresponds to $\mathcal{P}i_\mathcal{P}(T)=i_\mathcal{P}(T)\in\mathcal{L}(\mathcal{P}UB^p(X))$.

Here $T=(T^<,T^>)$, where $T^>$ acts on $B^p_u(X)$ as left multiplication by $T$, and $T^<$ acts on $B^p_u(X)$ as right multiplication by $T$.
\end{lem}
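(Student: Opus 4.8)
The plan is to make the pair $B^p_u(X)\otimes_{i_\mathcal{P}}UB^p(X)$ completely explicit and then exhibit an obvious isomorphism onto the corner pair, whose inverse is simple enough that the isometry claim becomes automatic. Unwinding the definitions, $B^p_u(X)\otimes_{i_\mathcal{P}}UB^p(X)$ is the tensor product over $B^p_u(X)$ of the standard $B^p_u(X)$-pair $(B^p_u(X),B^p_u(X))$ with the standard $UB^p(X)$-pair $(UB^p(X),UB^p(X))$, the latter regarded as a $B^p_u(X)$-$UB^p(X)$-pair via $i_\mathcal{P}$; thus its two sides are $B^p_u(X)\otimes_{B^p_u(X)}UB^p(X)$ and $UB^p(X)\otimes_{B^p_u(X)}B^p_u(X)$, where in each tensor product exactly one of the two $B^p_u(X)$-actions on $UB^p(X)$ is the one twisted by $i_\mathcal{P}$, and the $UB^p(X)$-valued pairing is $\langle f^{<}\otimes e^{<},e^{>}\otimes f^{>}\rangle=f^{<}i_\mathcal{P}(e^{<}e^{>})f^{>}$. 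On the other side I read $\mathcal{P}UB^p(X)$ as the corner pair $(UB^p(X)\mathcal{P},\,\mathcal{P}UB^p(X))$, with module actions given by left and right multiplication and pairing $\langle b\mathcal{P},\mathcal{P}b'\rangle=b\mathcal{P}b'$. Before anything else I would record the identities $\mathcal{P}\,i_\mathcal{P}(a)=i_\mathcal{P}(a)\,\mathcal{P}=i_\mathcal{P}(a)$ for all $a\in B^p_u(X)$, which follow from $e_{xx}e_{xy}=e_{xy}=e_{xy}e_{yy}$ on $\mathbb{C}^p_u[X]$ together with continuity; in particular they show that $UB^p(X)\mathcal{P}$ and $\mathcal{P}UB^p(X)$ are closed and that $i_\mathcal{P}(T)$ defines an operator on the corner pair via left multiplication on the $>$-side and right multiplication on the $<$-side, so that the operator $\mathcal{P}i_\mathcal{P}(T)=i_\mathcal{P}(T)$ of the statement makes sense.

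I would then write down $\Phi=(\Phi^{<},\Phi^{>})$ with $\Phi^{>}(a\otimes b)=i_\mathcal{P}(a)b$ and $\Phi^{<}(c)=c\otimes I$, together with its candidate inverse $\Psi^{>}(c)=I\otimes c$, $\Psi^{<}(b\otimes a)=b\,i_\mathcal{P}(a)$. The verifications are all one line each: $\Phi^{>}$ and $\Psi^{<}$ descend to the balanced tensor products because $i_\mathcal{P}$ is a homomorphism; all four maps are contractive because $||i_\mathcal{P}||\leq1$ and $||I||=1$ in $B^p_u(X)$; and $\Phi^{<},\Phi^{>}$ are inverse to $\Psi^{<},\Psi^{>}$ because $\Phi^{>}(I\otimes c)=\mathcal{P}c=c$ on $\mathcal{P}UB^p(X)$ while $a\otimes b=I\otimes i_\mathcal{P}(a)b$ by the balancing relation (using that $B^p_u(X)$ is unital). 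Hence $\Phi^{<},\Phi^{>}$ are isometric bijections; surjectivity of $\Phi^{>}$ onto $\mathcal{P}UB^p(X)$ is also clear since $\mathcal{P}=i_\mathcal{P}(I)$ lies in the image. Compatibility of $\Phi$ with the $UB^p(X)$-module actions is associativity of multiplication, and compatibility with the pairings is immediate from $\langle I,a\rangle_{B^p_u(X)}=a$; together these give the isomorphism of Banach $UB^p(X)$-pairs, with $\Psi$ the inverse operator in $\mathcal{L}$.

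Finally, for $T\in B^p_u(X)$ the operator $T\otimes 1$ acts on the $>$-side by $a\otimes b\mapsto Ta\otimes b$ and on the $<$-side by $b\otimes a\mapsto b\otimes aT$; transporting these along $\Phi$ yields $i_\mathcal{P}(Ta)b=i_\mathcal{P}(T)\bigl(i_\mathcal{P}(a)b\bigr)$ on the $>$-side and, using the balancing relation once more, right multiplication by $i_\mathcal{P}(T)$ on the $<$-side, which is exactly the operator $i_\mathcal{P}(T)\in\mathcal{L}(\mathcal{P}UB^p(X))$. The only point that needs care is conceptual rather than computational: because $i_\mathcal{P}$ is non-unital, $UB^p(X)$ is a degenerate left $B^p_u(X)$-module, so the balanced tensor product collapses onto the corner $\mathcal{P}UB^p(X)$ rather than onto all of $UB^p(X)$, and one must be scrupulous about which of the two $B^p_u(X)$-actions is twisted by $i_\mathcal{P}$; once this bookkeeping is fixed and the identities for $\mathcal{P}$ are in hand, every remaining step is a routine identity.
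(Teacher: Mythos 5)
Your proof is correct and follows essentially the same route as the paper: the isomorphism is the same map $a\otimes b\mapsto i_\mathcal{P}(a)b$ (with $c\mapsto c\otimes I$ on the other side), the only difference being that you exhibit an explicit contractive two-sided inverse where the paper verifies the universal property of the balanced tensor product directly. You also carry out the operator-correspondence computation that the paper omits as straightforward, and your remark that the non-unitality of $i_\mathcal{P}$ forces the tensor product to collapse onto the corner $\mathcal{P}UB^p(X)$ is exactly the right point to flag.
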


\begin{proof}
The map $\psi:B^p_u(X)\times UB^p(X)\rightarrow \mathcal{P}UB^p(X)$ given by \[\psi(a,b)=i_\mathcal{P}(a)b=\mathcal{P}i_\mathcal{P}(a)b\] is a $\mathbb{C}$-bilinear, $B^p_u(X)$-balanced map of norm at most 1.

Given any Banach space $H$ and any $B^p_u(X)$-balanced bounded bilinear map $\mu:B^p_u(X)\times UB^p(X)\rightarrow H$, we consider the following linear map $\tilde{\mu}:\mathcal{P}UB^p(X)\rightarrow H$.
Given $b\in UB^p(X)$, define \[\tilde{\mu}(\mathcal{P}b)=\mu(I,b)=\mu(I,i_\mathcal{P}(I)b)=\mu(I,\mathcal{P}b).\]
For any $a\in B^p_u(X)$, we have $\mu(a,b)=\mu(I,i_\mathcal{P}(a)b)=\tilde{\mu}(\mathcal{P}i_\mathcal{P}(a)b)=\tilde{\mu}(\psi(a,b))$.
Moreover, $||\tilde{\mu}(\mathcal{P}b)||=||\mu(I,\mathcal{P}b)||\leq ||\mu|| ||\mathcal{P}b||$ so $||\tilde{\mu}||\leq||\mu||$.
On the other hand, $||\mu(a,b)||=||\tilde{\mu}(\psi(a,b))||\leq||\tilde{\mu}|| ||a|| ||b||$ so $||\mu||\leq||\tilde{\mu}||$.
Hence $\tilde{\mu}$ is the unique linear map from $\mathcal{P}UB^p(X)$ to $H$ such that $||\tilde{\mu}||=||\mu||$ and $\mu=\tilde{\mu}\circ\psi$.

It follows that we have an (isometric) isomorphism \[ B^p_u(X)\otimes_{i_\mathcal{P}}UB^p(X)\cong \mathcal{P}UB^p(X) \] of right $UB^p(X)$-modules with $a\otimes b\mapsto i_\mathcal{P}(a)b=\mathcal{P}i_\mathcal{P}(a)b$.

The corresponding isomorphism \[UB^p(X)\mathcal{P}\rightarrow UB^p(X)\otimes_{i_\mathcal{P}}B^p_u(X)\] of left $UB^p(X)$-modules is given by $S\mathcal{P}\mapsto S\otimes I$.

We omit the straightforward verification of the correspondence of linear operators. \end{proof}

Similarly, we also have the matrix version
\[ B^p_u(X)^n\otimes_{i_\mathcal{P}}UB^p(X)\cong \mathcal{P}^{\oplus n}UB^p(X)^n, \]
and any linear operator of the form $T\otimes 1\in\mathcal{L}(B^p_u(X)^n\otimes_{i_\mathcal{P}}UB^p(X))$, where $T\in M_n(B^p_u(X))$, corresponds to $\mathcal{P}^{\oplus n}i_\mathcal{P}^{(n)}(T)=i_\mathcal{P}^{(n)}(1_n)i_\mathcal{P}^{(n)}(T)\in\mathcal{L}(\mathcal{P}^{\oplus n}UB^p(X)^n)$.

\begin{lem}
Let $\pi:UB^p(X)\rightarrow\mathcal{K}(E)$ be the isomorphism from the proof of Theorem \ref{thm:equivalence}. Then $B^p_u(X)\otimes_{\pi\circ i_\mathcal{P}}E$ and $\pi(\mathcal{P})E$ are isomorphic as Banach $B^p_u(X)$-pairs.
\end{lem}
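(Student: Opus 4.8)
The plan is to show that the balanced tensor product $B^p_u(X)\otimes_{\pi\circ i_\mathcal{P}}E$ can be computed "one factor at a time": since $\pi\circ i_\mathcal{P}$ factors as the homomorphism $i_\mathcal{P}:B^p_u(X)\to UB^p(X)$ followed by the $UB^p(X)$-action $\pi:UB^p(X)\to\mathcal{L}_{B^p_u(X)}(E)$, the module $B^p_u(X)\otimes_{\pi\circ i_\mathcal{P}}E$ should be naturally identified with $\bigl(B^p_u(X)\otimes_{i_\mathcal{P}}UB^p(X)\bigr)\otimes_{UB^p(X)}E$, i.e.\ with an associativity-of-balanced-tensor-products step composed with the previous lemma. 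First I would record the general fact that for a contractive homomorphism $\varphi:A\to C$ (here $A=B^p_u(X)$, $C=UB^p(X)$, $\varphi=i_\mathcal{P}$) and a Banach $C$-pair $E$ (viewed as a Banach $A$-pair via $\pi_C\circ\varphi$), one has an isometric isomorphism of Banach $B^p_u(X)$-pairs $A\otimes_{\varphi}E\cong (A\otimes_\varphi C)\otimes_C E$, where $A\otimes_\varphi C$ is the $A$-$C$-pair $(C,C)$ with the left $A$-action through $\varphi$. This is the pair-level version of the associativity statement recalled in Section~\ref{sect:Mor} for $E\otimes_B F$, applied with the three algebras being $\mathbb{C}$ (or rather with $A$ acting on the left), $A$ acting via $\varphi$, and $C$, $B^p_u(X)$ in the appropriate slots; concretely the isomorphism sends $a\otimes e^>\mapsto (a\otimes 1_C)\otimes e^>$ on the $>$-side and symmetrically on the $<$-side, and the bilinear pairings match because both are computed by the same iterated pairing $\langle f^<,\langle c^<,c^>\rangle e^>\rangle$.

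Next I would invoke the previous lemma, which identifies $B^p_u(X)\otimes_{i_\mathcal{P}}UB^p(X)$ with $\mathcal{P}UB^p(X)$ as a Banach $UB^p(X)$-pair, with $a\otimes b\mapsto i_\mathcal{P}(a)b=\mathcal{P}i_\mathcal{P}(a)b$ on the $>$-side and $S\mathcal{P}\mapsto S\otimes I$ on the $<$-side; tensoring this isomorphism on the right by $E$ over $UB^p(X)$ gives $B^p_u(X)\otimes_{\pi\circ i_\mathcal{P}}E\cong \bigl(\mathcal{P}UB^p(X),UB^p(X)\mathcal{P}\bigr)\otimes_{UB^p(X)}E$. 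Finally, for the corner $\mathcal{P}UB^p(X)$ one has the standard identification $\mathcal{P}UB^p(X)\otimes_{UB^p(X)}E^>\cong \pi(\mathcal{P})E^>$ via $\mathcal{P}b\otimes e^>\mapsto \pi(\mathcal{P}b)e^>=\pi(\mathcal{P})\pi(b)e^>$ (this is the general fact that $cC\otimes_C E^>\cong \pi_C(c)\,\overline{CE^>}=\pi_C(c)E^>$ when $E^>$ is nondegenerate, using that $\mathcal{P}$ is an idempotent), and symmetrically $UB^p(X)\mathcal{P}\otimes_{UB^p(X)}E^<\cong E^<\pi(\mathcal{P})$; the pairing descends correctly since $\langle e^<\pi(\mathcal{P}),\pi(\mathcal{P})e^>\rangle_{B^p_u(X)}=\langle e^<,\pi(\mathcal{P})e^>\rangle_{B^p_u(X)}$ as $\pi(\mathcal{P})$ is idempotent. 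Composing these three isomorphisms gives $B^p_u(X)\otimes_{\pi\circ i_\mathcal{P}}E\cong \pi(\mathcal{P})E$ as Banach $B^p_u(X)$-pairs.

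The main obstacle I anticipate is bookkeeping rather than substance: one must be careful that the left $B^p_u(X)$-action on $B^p_u(X)\otimes_{\pi\circ i_\mathcal{P}}E$ (by left multiplication in the first tensor factor) really does correspond to left multiplication in $\mathcal{P}UB^p(X)$ and then to $\pi(\mathcal{P})E$ acted on by $\pi\circ i_\mathcal{P}$ — this is exactly the content that will be reused to conclude $E_*\circ(i_\mathcal{P})_*=\mathrm{id}$, since under these identifications the operator $T\otimes 1$ corresponds to $i_\mathcal{P}(T)$ acting through $\pi$ on $\pi(\mathcal{P})E$. Verifying that each intermediate map is isometric (not merely bounded) requires checking norms in both directions, as in the previous lemma; this follows from the universal property of the balanced tensor product together with the fact that $i_\mathcal{P}$ and $\pi$ are contractive and $\mathcal{P}$, $\pi(\mathcal{P})$ have norm one. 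I would present the argument by first proving the general associativity and corner lemmas as displayed isomorphisms and then chaining them, leaving the routine naturality checks to the reader as in the preceding lemma.
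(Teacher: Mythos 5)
Your proof is correct in substance and, after unwinding, produces exactly the same isomorphism $a\otimes\xi^>\mapsto\pi(\mathcal{P})^>\pi(i_\mathcal{P}(a))^>\xi^>$ as the paper, but by a genuinely different route. The paper's proof is a direct one-step construction: since $B^p_u(X)$ is unital, every elementary tensor satisfies
\[ a\otimes\xi^> = I\otimes\pi(i_\mathcal{P}(a))^>\xi^> = I\otimes\pi(\mathcal{P})^>\pi(i_\mathcal{P}(a))^>\xi^>, \]
and one then writes down the two maps $a\otimes\xi^>\mapsto\pi(\mathcal{P})^>\pi(i_\mathcal{P}(a))^>\xi^>$ and $\pi(\mathcal{P})^<\eta^<\mapsto\pi(\mathcal{P})^<\eta^<\otimes I$ and verifies they are mutually inverse isometries, exactly in the style of the previous lemma. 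Your factorization through associativity of the balanced tensor product, the previous lemma, and the corner identification $\mathcal{P}UB^p(X)\otimes_{UB^p(X)}E\cong\pi(\mathcal{P})E$ is more modular: it reuses the previous lemma instead of reproving a variant of it, isolates a reusable corner lemma, and makes explicit where nondegeneracy of $E$ over $UB^p(X)$ (established in the proof of Theorem \ref{thm:equivalence}) enters. The price is invoking associativity of balanced tensor products of Banach pairs, which the paper never states but does use silently in the chain of isomorphisms in the proposition closing Section \ref{sect:inv}, so nothing extra is really being assumed; and your composite computation confirms the resulting map agrees with the paper's. Two small repairs are needed. First, $UB^p(X)$ is non-unital, so the element $1_C$ and hence the formula $a\otimes e^>\mapsto(a\otimes 1_C)\otimes e^>$ do not make sense, and $A\otimes_\varphi C$ is not the pair $(C,C)$ but rather $(UB^p(X)\mathcal{P},\mathcal{P}UB^p(X))$ by the previous lemma; the associativity isomorphism should instead be described by $(a\otimes c)\otimes e^>\mapsto a\otimes ce^>$, with surjectivity coming from nondegeneracy of $E$ over $UB^p(X)$ and isometry from the universal property. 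Second, in your corner lemma the bound on $||\tilde{\mu}||$ uses $||\mathcal{P}||=1$; this is true, since $\mathcal{P}$ is diagonal with entries $e_{xx}=e_x\otimes e_x^*$ of norm one, but it deserves a sentence.
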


\begin{proof}
Observe that in $B^p_u(X)\otimes_{\pi\circ i_\mathcal{P}}E^>$, we have
\[ a\otimes\xi = I\otimes\pi(i_\mathcal{P}(a))^>\xi = I\otimes\pi(\mathcal{P})^>\pi(i_\mathcal{P}(a))^>\xi, \]
and similarly in $E^<\otimes_{\pi\circ i_\mathcal{P}}B^p_u(X)$.

It is then straightforward to verify that an isomorphism is given by the following maps:
\begin{align*}
B^p_u(X)\otimes_{\pi\circ i_\mathcal{P}}E^>\rightarrow\pi(\mathcal{P})^>E^>, &\; a\otimes \xi^>\mapsto \pi(\mathcal{P})^>\pi(i_\mathcal{P}(a))^>\xi^>, \\
\pi(\mathcal{P})^<E^<\rightarrow E^<\otimes_{\pi\circ i_\mathcal{P}}B^p_u(X), &\; \pi(\mathcal{P})^<\eta^<\mapsto \pi(\mathcal{P})^<\eta^<\otimes I.
\end{align*} 
\end{proof}


Given $\xi^>=[\xi^>_{xy}]\in E^>$, we have
\[ (\pi(\mathcal{P})^>\xi^>)_{xy}=(\mathcal{P}\xi^>)_{xy}=e_{xx}\xi^>_{xy}=e_x^*(\xi^>_{xy})e_x\in\ell^p,\]
and \[ (\pi(\mathcal{P})^<\xi^<)_{xy}=(\xi^<P)_{xy}=\xi^<_{xy}e_{yy}=\xi^<_{xy}(e_y)e_y^*\in\ell^q.\]

\begin{lem}
$\pi(\mathcal{P})E$ and $B^p_u(X)$ are isomorphic as Banach $B^p_u(X)$-pairs.
\end{lem}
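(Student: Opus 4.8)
The plan is to exhibit an explicit pair of maps between $\pi(\mathcal{P})E$ and the standard $B^p_u(X)$-pair $(B^p_u(X),B^p_u(X))$ and check they are mutually inverse isometries of Banach $B^p_u(X)$-pairs. Recall from the displayed formulas just above that for $\xi^>=[\xi^>_{xy}]\in E^>$ the entries of $\pi(\mathcal{P})^>\xi^>$ are the scalars $e_x^*(\xi^>_{xy})$ sitting in the $x$-slot of $\ell^p$, so $\pi(\mathcal{P})^>E^>$ consists of $X\times X$ matrices whose $(x,y)$-entry is a scalar multiple of $e_x\in\ell^p$; similarly $\pi(\mathcal{P})^<E^<$ consists of matrices whose $(x,y)$-entry is a scalar multiple of $e_y^*\in\ell^q$. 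First I would define $\Phi^>:\pi(\mathcal{P})^>E^>\to B^p_u(X)$ by sending such a matrix to the scalar matrix $[e_x^*(\xi^>_{xy})]_{x,y}$, and $\Phi^<:B^p_u(X)\to\pi(\mathcal{P})^<E^<$ by sending a scalar matrix $[b_{xy}]$ to the matrix with $(x,y)$-entry $b_{xy}\,e_y^*$. These are clearly well-defined on the algebraic (finite-propagation) level, and one checks $\|\Phi^>(\xi^>)\|=\|\pi(\mathcal{P})^>\xi^>\|$ and $\|\Phi^<(b)\|=\|b\|$ using that $v:=e_x$ and $v':=e_x^*$ are dual unit vectors; hence both extend to the completions.

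Next I would verify the three conditions making $\Phi=(\Phi^<,\Phi^>)$ a morphism of Banach $B^p_u(X)$-pairs: right $B^p_u(X)$-linearity of $\Phi^>$ and left $B^p_u(X)$-linearity of $\Phi^<$ are immediate from the definition of the module actions on $E$ (which are by matrix multiplication composing scalars with $v$ or $v'$), and the compatibility with the pairings amounts to the computation $\langle\Phi^<(b^<),\Phi^>(\xi^>)\rangle_{B^p_u(X)}=b^< \,[e_x^*(\xi^>_{xy})]=\langle b^<,\pi(\mathcal{P})^>\xi^>\rangle$, which follows because the $(y,z)$-entry of $e^<e^>$ is $\sum_w\langle e^<_{yw},e^>_{wz}\rangle$ and here the pairing $\langle b_{yw}e_w^*, e_w^*(\xi^>_{wz})e_w\rangle=b_{yw}e_w^*(\xi^>_{wz})$ reproduces exactly matrix multiplication of the scalar matrices. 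Then I would exhibit the inverse morphism $\Psi=(\Psi^<,\Psi^>)$ with $\Psi^>:B^p_u(X)\to\pi(\mathcal{P})^>E^>$ sending $[b_{xy}]$ to the matrix with $(x,y)$-entry $b_{xy}e_x$, and $\Psi^<$ the obvious partner, and check $\Phi^>\Psi^>=\mathrm{id}$, $\Psi^>\Phi^>=\mathrm{id}$ on $\pi(\mathcal{P})^>E^>$ (using $\pi(\mathcal{P})^>$ is idempotent with range exactly these matrices), and likewise for the $<$ components.

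The step I expect to be the main obstacle is the identification of the range of $\pi(\mathcal{P})^>$ (and $\pi(\mathcal{P})^<$) precisely enough to see that $\Phi^>$ really is surjective and isometric on the completion, rather than merely on $E^>_{alg}$: one must argue that passing to operator-norm closures does not enlarge the range beyond matrices with entries in $\mathbb{C}\cdot e_x$, and that the norm of such a matrix, viewed as an operator $\ell^p(X)\to\ell^p(X,\ell^p)$, coincides with its norm as an element of $B^p_u(X)\subseteq B(\ell^p(X))$. This is a routine but slightly delicate computation: for a matrix $\eta^>$ with $\eta^>_{xy}=b_{xy}e_x$ and $\zeta\in\ell^p(X)$ one has $(\eta^>\zeta)_x=\big(\sum_y b_{xy}\zeta_y\big)e_x$, so $\|\eta^>\zeta\|_{\ell^p(X,\ell^p)}^p=\sum_x\big|\sum_y b_{xy}\zeta_y\big|^p=\|[b_{xy}]\zeta\|_{\ell^p(X)}^p$, giving the isometry on the nose; the analogous computation for $E^<$ with $\ell^q$-entries is symmetric. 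Once these norm identities are in hand, everything else is formal, and combining the three lemmas yields $E_*\circ(i_\mathcal{P})_*=\mathrm{id}$ on $K$-theory as claimed.
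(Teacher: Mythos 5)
Your proposal is correct and uses exactly the same isomorphism as the paper: the maps $\pi(\mathcal{P})^>\xi^>\mapsto[e_x^*(\xi^>_{xy})]$ and $[S_{xy}]\mapsto[S_{xy}e_y^*]$, which the paper simply writes down without further verification. The additional checks you supply (the norm identity $\|\eta^>\zeta\|_{\ell^p(X,\ell^p)}=\|[b_{xy}]\zeta\|_{\ell^p(X)}$, module linearity, and compatibility with the pairings) are the routine details the paper leaves implicit, and they are carried out correctly.
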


\begin{proof}
An isomorphism is given by the following maps:
\begin{align*}
\pi(\mathcal{P})^>E^>\rightarrow B^p_u(X), &\; \pi(\mathcal{P})^>\xi^>\mapsto [e_x^*(\xi^>_{xy})], \\
B^p_u(X)\rightarrow \pi(\mathcal{P})^<E^<, &\; S=[S_{xy}]\mapsto [S_{xy}e_y^*]=\pi(\mathcal{P})^<[S_{xy}e_y^*].
\end{align*} 
\end{proof}


Combining the two results above yields the following isomorphism of Banach $B^p_u(X)$-pairs:
\[ B^p_u(X)\otimes_{\pi\circ i_\mathcal{P}}E\cong B^p_u(X). \]

\begin{lem}
Under the isomorphism $B^p_u(X)\otimes_{\pi\circ i_\mathcal{P}}E\cong B^p_u(X)$, any linear operator of the form $T\otimes 1\in\mathcal{L}(B^p_u(X)\otimes_{\pi\circ i_\mathcal{P}}E)$, where $T\in B^p_u(X)$, corresponds to $T\in\mathcal{L}(B^p_u(X))$.
\end{lem}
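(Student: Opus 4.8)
The plan is to track the identity operator $T \otimes 1$ through the chain of three isomorphisms of Banach $B^p_u(X)$-pairs established in the preceding lemmas, and verify that under each one it corresponds to the expected operator, so that the composite sends $T\otimes 1$ to $T$. Concretely, the composite isomorphism $B^p_u(X) \otimes_{\pi\circ i_\mathcal{P}} E \cong B^p_u(X)$ is obtained by first identifying $B^p_u(X) \otimes_{\pi\circ i_\mathcal{P}} E$ with $\pi(\mathcal{P})E$ (second-to-last lemma), and then identifying $\pi(\mathcal{P})E$ with $B^p_u(X)$ (third-to-last lemma). So I would first recall that under the first of these, $T\otimes 1$ corresponds to the restriction of $\pi(i_\mathcal{P}(T))$ to $\pi(\mathcal{P})E$; this is the analogue of the correspondence already recorded in the first lemma of this section (where $T\otimes 1$ on $B^p_u(X)\otimes_{i_\mathcal{P}}UB^p(X)$ corresponds to $i_\mathcal{P}(T)$ acting on $\mathcal{P}UB^p(X)$), now transported along $\pi$ using that $\pi$ is a homomorphism and $\pi(i_\mathcal{P}(T))^> (a\otimes\xi^>) $ matches $a\otimes \pi(i_\mathcal{P}(T))^>\xi^>$ on the balanced tensor product.

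Next I would compute what $\pi(i_\mathcal{P}(T))$ restricted to $\pi(\mathcal{P})E$ becomes under the identification $\pi(\mathcal{P})E \cong B^p_u(X)$, using the explicit maps $\pi(\mathcal{P})^>\xi^> \mapsto [e_x^*(\xi^>_{xy})]$ and $S = [S_{xy}] \mapsto [S_{xy}e_y^*]$ from the third-to-last lemma. For the $>$-side: given $\pi(\mathcal{P})^>\xi^>$ with entries $e_x^*(\xi^>_{xy})e_x$, the operator $\pi(i_\mathcal{P}(T))^>$ acts by left matrix multiplication with $i_\mathcal{P}(T)_{xz} = T_{xz}e_{xz}$, so the $(x,y)$-entry of $\pi(i_\mathcal{P}(T))^>\pi(\mathcal{P})^>\xi^>$ is $\sum_z T_{xz}e_{xz}\big(e_z^*(\xi^>_{zy})e_z\big) = \sum_z T_{xz} e_z^*(\xi^>_{zy}) e_x$; applying $e_x^*$ to this and collecting entries gives the matrix $\big[\sum_z T_{xz}\, e_z^*(\xi^>_{zy})\big] = T\cdot [e_z^*(\xi^>_{zy})]$, which is exactly left multiplication by $T$ applied to the image of $\pi(\mathcal{P})^>\xi^>$. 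A parallel computation on the $<$-side, using $\pi(i_\mathcal{P}(T))^<$ acting by right multiplication by $i_\mathcal{P}(T)$ and the map $S\mapsto [S_{xy}e_y^*]$, shows that $\pi(i_\mathcal{P}(T))^<$ corresponds to right multiplication by $T$ on $B^p_u(X)$. Hence on $B^p_u(X)$ the operator $T\otimes 1$ becomes $(\,\cdot T,\, T\cdot\,) = T \in \mathcal{L}(B^p_u(X))$, as claimed.

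The main obstacle is purely bookkeeping: one must be careful that the "$<$" components of operators on pairs compose contravariantly, so that left multiplication by $i_\mathcal{P}(T)$ appearing in $\pi(i_\mathcal{P}(T))^>$ corresponds, on the $<$-side, to right multiplication, and that the isomorphism maps in the third-to-last lemma are applied on the correct side; getting the variances and the order of composition right is the only place an error could creep in. Everything else is a matrix-entry computation of the kind already carried out repeatedly in this section, so I would present the two entrywise identities above and then assert that the remaining compatibility checks (linearity, continuity, intertwining of the pairings) are routine and have essentially been verified in the previous lemmas.
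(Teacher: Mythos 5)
Your proposal is correct and follows essentially the same route as the paper: trace $T\otimes 1$ through the composite of the preceding isomorphisms and verify, by the matrix-entry computation $e_x^*\bigl(\sum_z T_{xz}e_{xz}\eta_{zy}\bigr)=\sum_z T_{xz}e_z^*(\eta_{zy})$, that on the $>$-side it becomes left multiplication by $T$, with the dual check on the $<$-side (which the paper carries out explicitly by showing $ST\mapsto[S_{xy}e_y^*]\otimes T$, using $i_\mathcal{P}(S)i_\mathcal{P}(T)=i_\mathcal{P}(ST)$). The only difference is presentational: you factor the argument through the intermediate identification of $T\otimes 1$ with the restriction of $\pi(i_\mathcal{P}(T))$ to $\pi(\mathcal{P})E$, while the paper composes the two maps at once; both are fine.
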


\begin{proof}
Tracing through the right module maps 
\[ B^p_u(X)\otimes_{\pi\circ i_\mathcal{P}}E^>\rightarrow\pi(\mathcal{P})^>E^>\rightarrow B^p_u(X),\] we need to prove that
\[ [e_x^*((\pi(i_\mathcal{P}(Ta))^>\xi^>)_{xy})]=T[e_x^*((\pi(i_\mathcal{P}(a))^>\xi^>)_{xy})] \]
for $a\in B^p_u(X)$ and $\xi^>\in E^>$. 
Setting $\eta^>=\pi(i_\mathcal{P}(a))^>\xi^>$, it suffices to prove that
\[ [e_x^*((\pi(i_\mathcal{P}(T))^>\eta^>)_{xy})]=T[e_x^*(\eta^>_{xy})], \]
and we do so by comparing their matrix entries as follows:
\begin{align*}
e_x^*((\pi(i_\mathcal{P}(T))^>\eta^>)_{xy}) 
&= e_x^*(\sum_z i_\mathcal{P}(T)_{xz}\eta^>_{zy}) \\
&= e_x^*(\sum_z T_{xz}e_{xz}\eta^>_{zy}) \\
&= \sum_z T_{xz}e_x^*(e_{xz}\eta^>_{zy}) \\
&= \sum_z T_{xz}e_z^*(\eta^>_{zy}).
\end{align*}
Now tracing through the left module maps
\[ B^p_u(X)\rightarrow\pi(\mathcal{P})^<E^<\rightarrow E^<\otimes_{\pi\circ i_\mathcal{P}}B^p_u(X), \]
we need to prove that $ST$ is mapped to $[S_{xy}e_y^*]\otimes T$ for $S=[S_{xy}]\in B^p_u(X)$. 
This follows from the fact that in $E^<\otimes_{\pi\circ i_\mathcal{P}}B^p_u(X)$, we have
\begin{align*} 
[S_{xy}e_y^*]\otimes T &= \pi(i_\mathcal{P}(T))^<[S_{xy}e_y^*]\otimes I=[S_{xy}e_y^*]i_\mathcal{P}(T)\otimes I \\ &= [S_{xy}e_y^*][T_{xy}e_{xy}]\otimes I=[(ST)_{xy}e_y^*]\otimes I. 
\end{align*}
\end{proof}

Similarly, we also have the matrix version
\[ B^p_u(X)^n\otimes_{\pi\circ i_\mathcal{P}}E\cong B^p_u(X)^n,\]
and any linear operator of the form $T\otimes 1\in\mathcal{L}(B^p_u(X)^n\otimes_{\pi\circ i_\mathcal{P}}E)$, where $T\in M_n(B^p_u(X))$, corresponds to $T\in\mathcal{L}(B^p_u(X)^n)$.

\begin{prop}
$E_*\circ(i_\mathcal{P})_*=id_{K_*(B^p_u(X))}$, whence $(i_\mathcal{P})_*$ is the inverse of $E_*$.
\end{prop}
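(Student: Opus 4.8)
The plan is to chase the element $\zeta(x)\in KK^{ban}(\mathbb{C},B^p_u(X))$ through the composition of the two operations $(i_\mathcal{P})_*$ and $E_*$ and verify that we return to $\zeta(x)$, using the identification $\zeta:K_0(B^p_u(X))\xrightarrow{\sim}KK^{ban}(\mathbb{C},B^p_u(X))$ recalled above (and its matrix analogue). Fix $x\in K_0(B^p_u(X))$, and pick an idempotent $q\in M_{m+n}(\widetilde{B^p_u(X)})$ with $x=[q]-[1_m]$ and $q-1_m\in M_{m+n}(B^p_u(X))$, so that $\zeta(x)$ is represented by the cycle $\bigl(E_0\oplus E_1,\begin{pmatrix}0 & \pi_0 i_1\\ \pi_1 i_0 & 0\end{pmatrix}\bigr)$ with $E_j=(B^p_u(X)^{m+n}q_j,\,q_j B^p_u(X)^{m+n})$ as in Lafforgue's construction.

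First I would compute $(i_\mathcal{P})_*\zeta(x)$. Since $(i_\mathcal{P})_*$ is the Morita morphism associated to the $B^p_u(X)$-$UB^p(X)$-pair obtained from the homomorphism $i_\mathcal{P}:B^p_u(X)\to UB^p(X)$ (i.e.\ the standard pair $(UB^p(X),UB^p(X))$ with left $B^p_u(X)$-action through $i_\mathcal{P}$), tensoring the above cycle over $B^p_u(X)$ with this pair replaces each $E_j$ by $E_j\otimes_{i_\mathcal{P}}UB^p(X)$ and each operator $T$ by $T\otimes 1$. By the first Lemma (and its matrix version) we have $B^p_u(X)^{m+n}\otimes_{i_\mathcal{P}}UB^p(X)\cong \mathcal{P}^{\oplus(m+n)}UB^p(X)^{m+n}$, under which $q_j\otimes 1$ corresponds to $i_\mathcal{P}^{(m+n)}(q_j)$; so $(i_\mathcal{P})_*\zeta(x)$ is represented by the cycle built from the idempotent $i_\mathcal{P}^{(m+n)}(q)$ over $UB^p(X)$ together with $i_\mathcal{P}^{(m+n)}(1_m)$. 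Next I would apply $E_*$, which is the Morita morphism for the pair $E=(E^<,E^>)$ of Theorem~\ref{thm:equivalence}; this tensors everything over $UB^p(X)$ with $E$. Invoking the composed isomorphism $B^p_u(X)\otimes_{\pi\circ i_\mathcal{P}}E\cong B^p_u(X)$ (and its matrix version), together with the last Lemma which says $T\otimes 1$ corresponds to $T$ under this isomorphism, one sees that tensoring the $UB^p(X)$-cycle of the previous step with $E$ returns exactly the original $B^p_u(X)$-cycle: the modules $\bigl(\mathcal{P}^{\oplus(m+n)}UB^p(X)^{m+n}\bigr)\otimes_{\pi}E$ become $B^p_u(X)^{m+n}q_j$-type modules again, and the idempotent governing the pair is carried back to $q_j$.

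The one point requiring care is bookkeeping at the level of the idempotents and projection/inclusion operators $i_j,\pi_j$ rather than just the underlying modules: I need that under $B^p_u(X)^{m+n}\otimes_{i_\mathcal{P}}UB^p(X)\cong \mathcal{P}^{\oplus(m+n)}UB^p(X)^{m+n}$ the operator $\pi_{j}\otimes 1$ (resp.\ $i_j\otimes 1$) is intertwined with the corresponding projection/inclusion for the idempotent $i_\mathcal{P}^{(m+n)}(q_j)$, and likewise after tensoring with $E$; this is exactly the content of the ``correspondence of linear operators'' clauses in the three lemmas, applied to $T=q_j$, $T=1_m$, and the off-diagonal entries $\pi_0 i_1,\pi_1 i_0$, all of which lie in $M_{m+n}(B^p_u(X))$ after subtracting multiples of the identity. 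Granting those, the two tensor operations compose to carry Lafforgue's cycle for $\zeta(x)$ to Lafforgue's cycle for $\zeta(x)$, hence $E_*\circ(i_\mathcal{P})_*\zeta(x)=\zeta(x)$ in $KK^{ban}(\mathbb{C},B^p_u(X))$ for all $x$. Since $\zeta$ is an isomorphism, $E_*\circ(i_\mathcal{P})_*=\mathrm{id}$ on $K_0(B^p_u(X))$; replacing all algebras by their suspensions gives the same identity on $K_1$. As $E_*$ is already known to be an isomorphism, it follows that $(i_\mathcal{P})_*$ is its inverse on $K_*(B^p_u(X))$.

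I expect the main obstacle to be purely organizational: making the identifications from the three lemmas fit together compatibly with the graded operator $\begin{pmatrix}0 & \pi_0 i_1\\ \pi_1 i_0 & 0\end{pmatrix}$ on the direct sum $E_0\oplus E_1$, so that one genuinely recovers the cycle representing $\zeta(x)$ on the nose (not merely up to homotopy) — though up-to-homotopy would already suffice. No hard analysis is involved; everything reduces to the already-established isomorphisms of Banach pairs and their naturality in $T$.
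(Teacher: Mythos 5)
Your proposal is correct and follows essentially the same route as the paper: represent the $K_0$-class by Lafforgue's cycle, push it through the two tensor operations, and use the three lemmas (module isomorphisms plus the correspondence $T\otimes 1\leftrightarrow T$) to recover the original cycle. The only difference is that the paper first normalizes the representative to a pair of \emph{orthogonal} idempotents $q,s$ with $sq=qs=0$, which makes the odd operator in the cycle identically zero and thereby eliminates the bookkeeping with $\pi_0 i_1$ and $\pi_1 i_0$ that you flag as the delicate point.
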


\begin{proof}
We consider the case of $K_0$; the case of $K_1$ is done similarly by considering suspensions.

Any class in $K_0(B^p_u(X))$ is of the form $[q]-[s]$, where $q,s\in M_n(B^p_u(X))$ are idempotents. 
We may assume that $sq=qs=0$ by enlarging $n$ and taking $s$ of the form $0\oplus 1_m$ for some $m\in\mathbb{N}$.
Then the corresponding class in $KK^{ban}(\mathbb{C},B^p_u(X))$ is given by $(F_0\oplus F_1,0)$, where 
\begin{align*}
F_0 &= ((B^p_u(X))^nq,q(B^p_u(X))^n), \\
F_1 &= ((B^p_u(X))^ns,s(B^p_u(X))^n).
\end{align*}

Similarly, the class $(i_\mathcal{P})_*([q]-[s])=[i_\mathcal{P}^{(n)}(q)]-[i_\mathcal{P}^{(n)}(s)]$ in $K_0(UB^p(X))\cong KK^{ban}(\mathbb{C},UB^p(X))$ is given by $(G_0\oplus G_1,0)$, where 
\begin{align*}
G_0 &= ((UB^p(X))^n(i_\mathcal{P}^{(n)}(q)),(i_\mathcal{P}^{(n)}(q))(UB^p(X))^n), \\
G_1 &= ((UB^p(X))^n(i_\mathcal{P}^{(n)}(s)),(i_\mathcal{P}^{(n)}(s))(UB^p(X))^n),
\end{align*}
and $E_*(i_\mathcal{P})_*([q]-[s])$ is given by $((G_0\oplus G_1)\otimes_\pi E,0\otimes 1)=((G_0\oplus G_1)\otimes_\pi E,0)$.

The preceding lemmas give us the following isomorphisms of Banach pairs for $T\in M_n(B^p_u(X))$:
\begin{align*} 
(i_\mathcal{P}^{(n)}(T)(UB^p(X)^n))\otimes_\pi E &\cong (T\otimes 1)(B^p_u(X)^n\otimes_{i_\mathcal{P}}UB^p(X))\otimes_\pi E \\
&\cong (T\otimes 1)(B^p_u(X)^n\otimes_{\pi\circ i_\mathcal{P}}E) \\
&\cong T(B^p_u(X)^n).
\end{align*}

Letting $T$ be $q$ or $s$ shows that $[((G_0\oplus G_1)\otimes_\pi E,0)]=[(F_0\oplus F_1,0)]$, whence $E_*\circ(i_\mathcal{P})_*=id_{K_0(B^p_u(X))}$.
\end{proof}

\section{An $\ell^p$ uniform coarse assembly map} \label{sect:assembly}

In this section, we use our results to define an $\ell^p$ uniform coarse assembly map taking values in the $K$-theory of the $\ell^p$ uniform Roe algebra.
This assembly map is defined in the same spirit as the $\ell^p$ coarse Baum-Connes assembly map studied in \cite{Chung23,SW,ZZ}.
We then show that this uniform coarse assembly map is not always surjective.

First, we extend the definition of the $\ell^p$ uniform algebra to proper metric spaces.

\begin{defn}
Let $(X,d)$ be a metric space. 
\begin{enumerate}
\item We say that $X$ is proper if all closed balls in $X$ are compact.
\item A net in $X$ is a discrete subset $Y\subseteq X$ such that there exists $r>0$ with the properties that $d(x,y)\geq r$ for all $x,y\in Y$ with $x\neq y$, and for any $x\in X$ there is $y\in Y$ with $d(x,y)<r$.
\item If $X$ is proper, we say that it has bounded geometry if it contains a net with bounded geometry.
\end{enumerate}
\end{defn}

\begin{defn}
Let $X$ be a proper metric space with bounded geometry, and fix a bounded geometry net $Z\subset X$. 
Denote by $U\mathbb{C}^p[X]$ the algebra of all finite propagation bounded operators $T$ on $\ell^p(Z,\ell^p)$ for which there exists $N\in\mathbb{N}$ such that 
$T_{xy}$ is an operator on $\ell^p$ of rank at most $N$ for all $x,y\in Z$.
The $\ell^p$ uniform algebra of $X$, denoted by $UB^p(X)$, is the operator norm closure of $U\mathbb{C}^p[X]$ in $B(\ell^p(Z,\ell^p))$.
\end{defn}

Up to non-canonical isomorphism, this definition of $UB^p(X)$ does not depend on the choice of $Z$.

\begin{defn} 
Let $X$ be a proper metric space with bounded geometry. The algebra $U\mathbb{C}^p_L[X]$ consists of all bounded, uniformly continuous functions $f:[0,\infty)\rightarrow U\mathbb{C}^p[X]$ such that $\mathrm{prop}(f(t))\rightarrow 0$ as $t\rightarrow\infty$. Equip $U\mathbb{C}^p_L[X]$ with the norm
\[ ||f||:=\sup_{t\in[0,\infty)}||f(t)||_{UB^p(X)}. \]
The completion of $U\mathbb{C}^p_L[X]$ under this norm will be denoted by $UB^p_L(X)$.
\end{defn}

\begin{defn}
Let $(X,d)$ be a discrete metric space with bounded geometry, and let $R>0$. The Rips complex of $X$ at scale $R$, denoted $P_R(X)$, is the simplicial complex with vertex set $X$ and such that a finite set $\{x_1,\ldots,x_n\}\subseteq X$ spans a simplex if and only if $d(x_i,x_j)\leq R$ for all $i,j=1,\ldots,n$.

Equip $P_R(X)$ with the spherical metric defined by identifying each $n$-simplex with the part of the $n$-sphere in the positive orthant, and equipping $P_R(X)$ with the associated length metric.
\end{defn}

For any discrete metric space $X$ with bounded geometry and any $R>0$, there is a homomorphism 
\[ i_R:K_*(UB^p(P_R(X)))\rightarrow K_*(UB^p(X)), \] 
and the $\ell^p$ uniform coarse assembly map
\[ \mu_u:\lim_{R\rightarrow\infty}K_*(UB^p_L(P_R(X)))\rightarrow K_*(UB^p(X)) \stackrel{\cong}{\rightarrow} K_*(B^p_u(X)) \]
is defined to be the limit of the composition
\[ K_*(UB^p_L(P_R(X)))\stackrel{e_*}{\rightarrow} K_*(UB^p(P_R(X)))\stackrel{i_R}{\rightarrow} K_*(UB^p(X))\stackrel{\cong}{\rightarrow} K_*(B^p_u(X)), \]
where $e:UB^p_L(P_R(X))\rightarrow UB^p(P_R(X))$ is the evaluation-at-zero map.

This assembly map is related to the $\ell^p$ coarse Baum-Connes assembly map, which is defined in a similar manner but in terms of the $\ell^p$ (non-uniform) Roe algebra $B^p(X)$ and a corresponding localization algebra $B^p_L(X)$.
We refer the reader to \cite[Section 2.1]{Chung23} for details.
There are inclusions $\iota:UB^p(X)\rightarrow B^p(X)$ and $\iota_L:UB^p_L(X)\rightarrow B^p_L(X)$, which induce the following commutative diagram:
\[
\begin{CD}
\lim_{R\rightarrow\infty}K_*(B^p_L(P_R(X)))	@>\mu>>	K_*(B^p(X)) \\
@A(\iota_L)_*AA  @AA\iota_*A \\
\lim_{R\rightarrow\infty}K_*(UB^p_L(P_R(X)))	@>\mu_u>>	K_*(UB^p(X)) @>\cong>> K_*(B^p_u(X))
\end{CD}
\]

Using this diagram, we shall see that the $\ell^p$ uniform coarse assembly map is not always surjective.

Let $G$ be a finitely generated, residually finite group with a sequence of normal subgroups of finite index $N_1\supseteq N_2\supseteq \cdots$ such that $\bigcap_i N_i=\{e\}$. The group $G$ is equipped with a word metric. Let $\square G=\bigsqcup_i G/N_i$ be the box space, i.e., the disjoint union of the finite quotients $G/N_i$, endowed with a metric $d$ such that its restriction to each $G/N_i$ is the quotient metric, while $d(G/N_i,G/N_j)\geq i+j$ if $i\neq j$.

\begin{prop}
Let $p\in(1,\infty)$. Let $G$ be a residually finite hyperbolic group. Let $N_1\supseteq N_2\supseteq\cdots$ be a sequence of normal subgroups of finite index such that $\bigcap_iN_i=\{e\}$. Assume that $\square G$ is an expander. If $q\in B^p_u(\square G)$ is the Kazhdan projection, then $[q]\in K_0(B^p_u(\square G))$ is not in the image of the $\ell^p$ uniform coarse assembly map.
\end{prop}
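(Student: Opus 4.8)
The plan is to transport the known non-surjectivity of the $\ell^p$ coarse Baum-Connes assembly map for the box space of an expander to the $\ell^p$ \emph{uniform} coarse assembly map, using the commutative diagram already displayed above. First I would recall the relevant input from \cite{Chung23}: for $p\in(1,\infty)$, when $\square G$ is an expander arising from a residually finite hyperbolic group, the $\ell^p$ coarse Baum-Connes assembly map $\mu:\lim_{R\to\infty}K_*(B^p_L(P_R(\square G)))\to K_*(B^p(\square G))$ fails to be surjective; more precisely, the class of the $\ell^p$ Kazhdan projection $[q]\in K_0(B^p(\square G))$ is not in the image of $\mu$. (The existence of the $\ell^p$ Kazhdan projection in $B^p_u(\square G)$ for an $\ell^p$-expander, and its nontriviality in $K$-theory, is established there; here the point is that $q$ lies in the uniform Roe algebra $B^p_u(\square G)$, not merely in $B^p(\square G)$.)

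The key step is then a diagram chase. Suppose for contradiction that $[q]\in K_0(B^p_u(\square G))$ lies in the image of $\mu_u$, i.e.\ that there is a class $\alpha\in\lim_R K_*(UB^p_L(P_R(\square G)))$ with $\mu_u(\alpha)=[q]$ under the identification $K_0(UB^p(\square G))\cong K_0(B^p_u(\square G))$. Push $\alpha$ into $\lim_R K_*(B^p_L(P_R(\square G)))$ via $(\iota_L)_*$. Commutativity of the square gives $\mu((\iota_L)_*\alpha)=\iota_*\big(\mu_u(\alpha)\big)$, where now $\mu_u(\alpha)$ is regarded in $K_0(UB^p(\square G))$ and $\iota_*$ carries it to $K_0(B^p(\square G))$. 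One must check that $\iota_*$, composed with the isomorphism $K_0(UB^p(\square G))\cong K_0(B^p_u(\square G))$, sends the uniform Kazhdan class to the class of $q$ inside $B^p(\square G)$; this is a naturality statement for the inclusions $B^p_u(\square G)\hookrightarrow B^p(\square G)$ and $UB^p(\square G)\hookrightarrow B^p(\square G)$, compatible with the homomorphism $i_{\mathcal P}:B^p_u(\square G)\to UB^p(\square G)$ of Section~\ref{sect:inv}. Granting that, we would obtain $\mu\big((\iota_L)_*\alpha\big)=[q]$ in $K_0(B^p(\square G))$, contradicting the non-surjectivity result from \cite{Chung23}. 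Hence $[q]$ is not in the image of $\mu_u$.

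The main obstacle is the identification in the previous paragraph: ensuring that the Kazhdan projection, viewed through the various algebras ($B^p_u$, $UB^p$, $B^p$) and through the $K$-theory isomorphism $E_*$ (equivalently $(i_{\mathcal P})_*$) of Theorem~\ref{thm:equivalence}, tracks to the class of $q$ in $B^p(\square G)$ used in \cite{Chung23}. Concretely, $i_{\mathcal P}$ compresses each matrix entry by a rank-one projection; applied to the Kazhdan idempotent $q=(q_{xy})$ this produces $i_{\mathcal P}(q)\in UB^p(\square G)$, and under the inclusion $UB^p(\square G)\hookrightarrow B^p(\square G)$ (after the isometric identification $\ell^p(\square G,\ell^p)\cong\ell^p(\square G\times\mathbb N)$ and a coarse equivalence $\square G\times\mathbb N\simeq\square G$) this idempotent represents the same $K$-theory class as $q$, since $i_{\mathcal P}(q)$ is a "diagonal compression" of $q\otimes e_{00}$ and such compressions by rank-one projections induce the identity on $K$-theory of Roe algebras via an Eilenberg swindle / stabilization argument. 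I would isolate this as a lemma: for any $T\in B^p_u(Y)$ with $Y$ of bounded geometry, the composite $K_*(B^p_u(Y))\xrightarrow{(i_{\mathcal P})_*}K_*(UB^p(Y))\xrightarrow{\iota_*}K_*(B^p(Y))$ equals the map induced by the canonical inclusion $B^p_u(Y)\hookrightarrow B^p(Y)$. Once this compatibility lemma is in place, the proof is the short diagram chase above; everything else is imported from \cite{Chung23} and from the earlier sections of this paper.
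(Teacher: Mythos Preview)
Your diagram chase is exactly the paper's argument, but you make the final step harder than necessary. The paper does not attempt to identify $\iota_*(i_{\mathcal P})_*[q]$ with the class of $q$ under the canonical inclusion $B^p_u(\square G)\hookrightarrow B^p(\square G)$. Instead it simply observes that $i_{\mathcal P}(q)\in UB^p(\square G)\subset B^p(\square G)$ is itself a \emph{non-compact ghost idempotent}: each entry $(i_{\mathcal P}(q))_{xy}=q_{xy}e_{xy}$ has rank one and the matrix coefficients decay exactly as those of $q$ do, so $i_{\mathcal P}(q)$ is ghost; it is non-compact because the blocks $i_{\mathcal P}(q_i)$ are nonzero idempotents of rank one on an infinite family of components. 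The result cited from \cite{Chung23} (Theorem~5.2 there) is the general statement that the class of any non-compact ghost idempotent in $B^p(\square G)$ lies outside the image of $\mu$, not merely the special case of the Kazhdan projection. So the contradiction is immediate once one knows $i_{\mathcal P}(q)$ is a non-compact ghost, and your compatibility lemma is never needed.

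Your compatibility lemma is nonetheless true, and your sketch is along the right lines: both $\iota\circ i_{\mathcal P}$ and the canonical inclusion are corner embeddings of $B^p_u(X)$ into $B^p(X)$ via rank-one idempotents in $B(\ell^p)$, and any two such are Murray--von Neumann equivalent in $B(\ell^p)$, hence conjugate by an invertible multiplier of $B^p(X)$, so they induce the same map on $K$-theory. This is a cleaner route than the Eilenberg swindle you mention. But the paper's use of the ghost-idempotent criterion bypasses this altogether.
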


\begin{proof}
The Kazhdan projection $q$ is given by $\bigoplus q_i$, where $q_i=\frac{1}{[G:N_i]}M_i$ and $M_i$ is a square matrix indexed by the elements of $G/N_i$ with all entries equal to 1 (cf. \cite[Proof of Theorem 4.3]{Chung23}).
Then $i_\mathcal{P}(q)\in UB^p(\square G)$ is a non-compact ghost idempotent.
By the commutative diagram above, if $(i_\mathcal{P})_*[q]$ is in the image of $\mu_u$, then $\iota_*(i_\mathcal{P})_*[q]$ is in the image of $\mu$, which contradicts \cite[Theorem 5.2]{Chung23}.
\end{proof}

When $p=2$, a uniform coarse assembly map was defined in \cite{Spa09}, taking values in the $K$-theory of $UB^2(X)$. However the domain of the map differs from ours. It will be interesting to determine whether the two maps are equivalent.

\begin{qn}
Is our $\ell^p$ uniform coarse assembly map equivalent to the one defined in \cite{Spa09} when $p=2$?
\end{qn}

\bibliographystyle{plain}
\bibliography{mybib}
\end{document}